\newtheorem{theorem}{Theorem}[section]
\theoremstyle{definition}
\newtheorem{definition}[theorem]{Definition}
\newtheorem{example}[theorem]{Example}
\theoremstyle{remark}
\newtheorem{remark}[theorem]{Remark}
\theoremstyle{assumption}
\numberwithin{equation}{section}
\newcommand{\uc}{\mathbb{S}}
\newcommand{\sha}{\succ\mkern-14mu_s\;}
\begin{document}
	
	\date{\today}
	
	\title[Twist like behavior in non-twist patterns of triods]{Twist like behavior in non-twist patterns of triods}

	\author{Sourav Bhattacharya and Ashish Yadav}

	\address[Dr. Sourav Bhattacharya and Ashish Yadav]
{Department of Mathematics, Visvesvaraya National Institute Of Technology Nagpur, 
	Nagpur, Maharashtra 440010, India}
\email{souravbhattacharya@mth.vnit.ac.in}

\subjclass[2010]{Primary 37E05, 37E15,  37E25, 37E40; Secondary 37E45}

\keywords{patterns, rotation numbers, rotation interval, twists, block structure, codes} 	
	
	\begin{abstract}
		
	We prove a sufficient condition for a \emph{pattern} $\pi$ on a \emph{triod} $T$ to have \emph{rotation number} $\rho_{\pi}$ coincide with an end-point of its \emph{forced rotation interval} $I_{\pi}$. Then, we demonstrate the existence of peculiar \emph{patterns} on \emph{triods} that are neither \emph{triod twists} nor possess a \emph{block structure} over a \emph{triod twist pattern}, but their \emph{rotation numbers}  are an end point of their respective \emph{forced rotation intervals}, mimicking the behavior of \emph{triod twist patterns}. These \emph{patterns}, absent in circle maps (see \cite{almBB}), highlight a key difference between the rotation theories for \emph{triods} (introduced in \cite{BMR}) and that of circle maps. We name these \emph{patterns}: ``\emph{strangely ordered}" and show that they are semi-conjugate to circle rotations via a piece-wise monotone map. We conclude by providing an algorithm to construct unimodal \emph{strangely ordered patterns} with arbitrary \emph{rotation pairs}.
	\end{abstract}
	
	\maketitle
	
	\section{Introduction}\label{introduction}
	In  the theory of discrete dynamical systems,  periodic orbits (also called cycles) display the simplest type of limit behavior of points.  Understanding how the existence of one periodic orbit affects the existence of another is a central problem here. A notable solution to this problem was obtained when in 1964, A.N. Sharkovsky formulated the celebrated Sharkovsky Theorem (\cite{shatr}), which furnishes a complete description of all possible sets of periods of periodic orbits of a given continuous interval map. To understand this theorem,  it is necessary to introduce a special ordering of the set $\mathbb{N}$  of natural numbers called the Sharkovsky ordering: 

$$3\sha 5\sha 7\sha\dots\sha 2\cdot3\sha 2\cdot5\sha 2\cdot7 \sha \dots $$
$$
\sha\dots 2^2\cdot3\sha 2^2\cdot5\sha 2^2\cdot7\sha\dots\sha 8\sha
4\sha 2\sha 1$$

We denote by $Sh(k)$,  the set of all natural numbers $m$ such that
$k\sha m$, together with $k$, and by $Sh(2^\infty)$ the set
$\{1,2,4,8,\dots\}$ which consists of all powers of $2$. For a continuous map $f$, let us denote the set of periods of its cycles by $Per(f)$. Then, Sharkovsky Theorem can be stated as follows: 

\begin{theorem}[\cite{shatr}]\label{t:shar}
	If $f:[0,1]\to [0,1]$ is a continuous map, $m\sha n$ and $m\in
	Per(f)$, then $n\in Per(f)$. Therefore, there exists $k \in  \mathbb{N}$ $
	\cup$  $ \{2^\infty\}$ such that $Per(f)=Sh(k)$. Conversely, if $k\in
	\mathbb{N} \cup \{2^\infty\}$ then there exists a continuous map
	$f:[0,1]\to [0,1]$ such that $Per(f)=Sh(k)$. 		
\end{theorem}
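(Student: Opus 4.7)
The plan is to prove the two halves of Theorem~\ref{t:shar} separately: the forcing direction (if $m\sha n$ and $m\in Per(f)$, then $n\in Per(f)$) and the realization direction. The forcing direction rests on a combinatorial machinery of covering relations and Markov graphs attached to a given periodic orbit, while the realization direction is handled through explicit piecewise-linear models.

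For the forcing direction, I would first install the standard tool: given compact intervals $I,J\subset[0,1]$, say $I$ \emph{$f$-covers} $J$, written $I\to J$, if $f(I)\supseteq J$. The fundamental lemma, proved via the Intermediate Value Theorem together with a compactness argument, states that any loop $I_0\to I_1\to\cdots\to I_{n-1}\to I_0$ produces a point $x\in I_0$ with $f^k(x)\in I_k$ for $0\le k<n$ and $f^n(x)=x$. Given a periodic orbit $P=\{p_1<\cdots<p_m\}$ of period $m$, the intervals $[p_j,p_{j+1}]$ form a Markov partition, and the covering relations between them constitute the \emph{Markov graph} $G(P)$, whose loop structure encodes which periods are forced.

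The heart of the argument is \v Stefan's theorem: if $m=2k+1\geq 3$ is odd, then $G(P)$ contains a canonical subgraph isomorphic to the graph of the \emph{\v Stefan cycle}, whose loops, once distinguished from shortcut loops of strictly smaller length, produce cycles of every period appearing to the right of $m$ in the Sharkovsky ordering. Having established this, I would deduce the even-period cases by applying the odd-period result to $f^2$ restricted to invariant subintervals produced by period-doubling structures, and obtain the powers of $2$ by induction on the doubling level, using that a cycle of period $2^{k+1}$ for $f$ yields one of period $2^k$ for $f^2$ on a suitable invariant interval.

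For realization, I would use a one-parameter family of truncated tent maps $T_s(x)=\min(sx,s(1-x))$: as $s$ varies over $[1,2]$, their period sets run monotonically through all tails $Sh(k)$, and for $k=2^\infty$ one selects a map at the Feigenbaum accumulation parameter. The main obstacle lies in the forcing half: one must ensure that the loops constructed in the Markov graph carry points whose period is \emph{exactly} $n$ rather than a proper divisor. This is the delicate combinatorial bookkeeping step, resolved by identifying a specific non-shortcutable loop inside the \v Stefan cycle — conceptually straightforward but requiring careful tracking of how points traverse the partition intervals.
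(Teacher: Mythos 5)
The paper does not prove this statement at all: Theorem~\ref{t:shar} is Sharkovsky's classical theorem, quoted with the citation \cite{shatr} as background for the rotation-theoretic results, so there is no internal proof to compare yours against. Judged on its own, your outline of the forcing half is the standard route (covering relations, the loop-to-periodic-point lemma, the Markov graph of an orbit, the \v Stefan subgraph for odd periods, reduction of the general case through iterates), and the points you flag as ``delicate bookkeeping'' are indeed where the real work sits: producing loops whose associated point has period \emph{exactly} $n$, and treating $m=2^tq$ with $q>1$ odd, where one must get periods $2^t k$ for $k\sha q$ \emph{and} all periods $2^{t+1}k$ and all powers of two. One wrinkle to fix there: a cycle of period $2^{k+1}$ for $f$ splitting into two $2^k$-cycles of $f^2$ does not by itself give $f$ a cycle of period $2^k$; the standard repair is the lemma that an even period greater than $2$ forces period $2$, applied to $g=f^{2^{k-1}}$, since a period-$2$ point of $g$ has $f$-period dividing $2^k$ but not $2^{k-1}$, hence exactly $2^k$.

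The genuine error is in the realization half. The family you write, $T_s(x)=\min(sx,s(1-x))$, is the ordinary tent family, not a truncated one: for every $s>1$ it has topological entropy $\log s>0$, and a positive-entropy interval map always has a cycle whose period is not a power of two, so $Per(T_s)$ is never $Sh(2)$, $Sh(4)$, \dots, nor $Sh(2^\infty)$; in particular there is no ``Feigenbaum accumulation parameter'' in this family, whose period sets jump rather than cascade. To realize every tail you need genuinely flat-topped (truncated) maps, e.g.\ $F_h(x)=\min\{h,\,1-2\lvert x-\tfrac12\rvert\}$ with the truncation height $h\in[0,1]$ as parameter: there the period set is nondecreasing in $h$, moves one Sharkovsky step at a time, every $Sh(k)$ with $k\in\mathbb{N}$ occurs, and $Sh(2^\infty)$ is obtained at the supremum of the parameters whose period sets contain only powers of two. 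With that family (or with explicit ad hoc examples for each $k$) the converse half is correct; as written, it fails for all the tails $Sh(2^j)$, $j\ge 1$, and for $Sh(2^\infty)$.
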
  

 Theorem \ref{t:shar} offers an elegant criterion for determining whether the existence of a cycle of a given period in a continuous interval map guarantees the existence of a cycle with a different period. However, period alone provides a coarse way to describe a cycle. Consequently, researchers have explored analogous coexistence rules based on finer properties of cycles that are also invariant under topological conjugacy. Over time, these efforts have broadened to include more general types of orbits, such as homoclinic trajectories, and have been extended to encompass more complex spaces beyond the real line. For a comprehensive overview of these advancements, see the books \cite{alm00} and \cite{BOM}, along with the paper \cite{BME}. The latter mentioned approach of extending Sharkovsky Theorem to a more general space was taken up for maps of \emph{triods} in \cite{alm98}, \cite{Ba}, \cite{BMR}, and \cite{BB5}.
 
 A \emph{triod} $T$ is defined as the set of complex numbers $z \in \mathbb{C}$ such that $z^3 \in [0,1]$. Essentially, a \emph{triod} consists of a central \emph{branching point} $a$ with three copies of the unit interval attached at their end-points. Each component of $T - \{a\}$ is called a \emph{branch} of $T$. The structure of the set of periods of periodic orbits for a map $f: T \to T$ with the central \emph{branching point} $a$ fixed was explored in \cite{alm98} and \cite{Ba}. A detailed description of these findings was presented in \cite{almnew}, where it was shown that the set of periods of such a map can be expressed as unions of ``initial segments" of the linear orderings associated with all rationals in the interval $(0,1)$, with denominators of at most $3$, defined on specific subsets of rational numbers.  However, this result was only observed, not explained.   In \cite{BMR}, Blokh and Misiurewicz introduced  rotation theory for \emph{triods}, providing a comprehensive explanation of the observed phenomenon.

In \cite{BB5}, the simplest patterns of \emph{triods} with a given \emph{rotation number}: \emph{patterns} which don't force other \emph{patterns} with the same \emph{rotation number}, called \emph{triod twists}, were investigated, and a complete characterization of such \emph{patterns} was obtained. Further,  the description of the dynamics of all possible unimodal \emph{triod twist patterns} with a given \emph{rotation number} was also formulated.

 In this paper, we continue study of maps on \emph{triods}, building upon the works in \cite{BB5} and \cite{BMR} and prove some interesting results. We prove a sufficient condition for a \emph{pattern} $\pi$ on a \emph{triod} $T$ to have its \emph{rotation number} $\rho_{\pi}$ correspond to an endpoint of the \emph{rotation interval} $I_{\pi}$ \emph{forced} by the \emph{pattern} $\pi$.  Leveraging this result, we demonstrate the existence of peculiar \emph{patterns} on \emph{triods}, termed \emph{strangely ordered}, which are neither \emph{triod twists} nor has a \emph{block structure} over a \emph{triod twist pattern}, but   possess \emph{rotation numbers} equal to an endpoint of their respective \emph{forced rotation intervals}. This discovery marks a significant departure from the previously observed phenomenon (see \cite{BMR}),  according to which rotation theory for circle maps (developed in \cite{mis82} and \cite{mis89} by Misiurewicz) and rotation theory for maps on \emph{triods} are analogous.  Notably in \cite{almBB}, it was shown that for \emph{patterns} on circle lacking a \emph{block structure} over \emph{twist patterns} (counter part of \emph{triod twist patterns} for circle maps), the \emph{rotation number} of the \emph{pattern} must lie in the interior of the \emph{rotation interval forced} by the \emph{pattern}. In contrast, our findings highlight an interesting point of deviation between the two theories.

Subsequently, we show that given a \emph{strangely ordered periodic orbit} $P$ with \emph{rotation number} $\frac{p}{q}$, g.c.d$(p,q)=1$ and \emph{rotation pair} $(kp,kq)$ where $k \in \mathbb{N}$, there exists a piece-wise monotone map $ \Phi : T \to \uc$ which semi-conjugates $P$ to circle rotation on $\uc$ by angle $\frac{p}{q}$.

To conclude, we show that \emph{strangely ordered periodic orbits} exists with arbitrary \emph{rotation pairs}. Given, $\frac{p}{q} \in \mathbb{Q}$, g.c.d$(p,q)=1$ and $k \in \mathbb{N}$, we substantiate an algorithm to construct unimodal \emph{strangely ordered patterns} with \emph{rotation pair} $(kp,kq)$.  The  paper is organized into three sections as follows:
 
 \begin{enumerate}
 	\item  Section 1: ``Introduction", provides an overview of the scope and objectives of the paper.
 	
 	\item Section 2: ``Preliminaries", reviews the foundational concepts and key results from rotation theory for \emph{triods}, as developed in \cite{BMR} and \cite{BB5}, which are essential for presenting the main results of this work.
 	
 	\item  In Section 3: ``Main Section", is the main section of the paper where we prove our results. 

 \end{enumerate}

 \section{Preliminaries}\label{preliminaries}

 \subsection{Notations} We denote by $\mathcal{U}$, the set of all continuous maps $f: T \to T$ for which the central \emph{branching point} $a$ of $T$ is the unique fixed point. For $x,y \in T$, we write $ x>y$,  if $x$ and $y$ lie on the same \emph{branch} of $T$ and $x$ is farther away from $a$ than $y$; write $ x \geqslant y$ if $x>y$ or $x=y$. Also, for any set $A$, let $[A]$ denote its convex hull.

 \subsection{Patterns} We call two cycles $P$ and $Q$ on the \emph{triod} $T$ \emph{equivalent} if there exists a homeomorphism $h: [P] \to [Q]$ conjugating $P$ and $Q$ and fixing \emph{branches} of $T$. The  equivalence classes of conjugacy of a cycle $P$ is called the \emph{pattern} of $P$. A cycle $P$ of a map $f \in \mathcal{U}$ is said to \emph{exhibit} a \emph{pattern} $A$ if $P$ belongs to the equivalence class $A$.    We call a cycle and its \emph{pattern} \emph{primitive} if each of its points lies on a different \emph{branch} of $T$. 
 
 A \emph{pattern} $A$ \emph{forces} a \emph{pattern} $B$ if and only if any map $f \in \mathcal{U}$ with a cycle of \emph{pattern} $A$ also has a cycle of \emph{pattern} $B$. It follows (see \cite{alm98,BMR}) that if a \emph{pattern} $A$ \emph{forces} a \emph{pattern} $B \neq A$, then $B$ doesn't \emph{force} $A$.  We say that a cycle $P$ \emph{forces} a cycle $Q$ if the \emph{pattern} \emph{exhibited} by $P$ \emph{forces} the \emph{pattern} exhibited  by $Q$. There is a beautiful way to find all \emph{patterns} \emph{forced} by a \emph{pattern} $A$.

 A map $f \in \mathcal{U} $ is called $P$-\emph{linear} for a cycle $P$,  if it fixes $a$, is \emph{affine} on every component of $[P] - (P \cup \{ a\})$ and also constant on every component of $T - [P]$ where $[P]$ is the convex hull of $P$.

 \begin{theorem}[\cite{alm98,BMR}]\label{forcing}
 	Let $f$ be a $P$-linear map where $P$ is a cycle of pattern $A$. Then a pattern $B$ is forced by $A$ if and only if $f$ has a cycle $Q$ of pattern $B$. 
 \end{theorem}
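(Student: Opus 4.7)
The plan is to prove both implications, with the forward direction being immediate from the definition of forcing and the reverse direction requiring a Markov-graph style argument adapted to the triod setting. For the forward direction, if $A$ forces $B$ then, since the $P$-linear map $f$ lies in $\mathcal{U}$ and exhibits a cycle of pattern $A$ (namely $P$), the definition of forcing guarantees that $f$ has a cycle of pattern $B$.

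For the reverse direction, I would associate to any $g \in \mathcal{U}$ possessing a cycle $P'$ of pattern $A$ a finite directed \emph{Markov graph}: the vertices are the closures of the connected components of $[P'] - (P' \cup \{a\})$ (the \emph{$P'$-basic intervals}) together with a vertex for the fixed point $a$, and an edge $I \to J$ is drawn whenever $g(I) \supseteq J$. The key structural claim is that the $P$-linear map $f$ realises the \emph{minimal} Markov graph among maps in $\mathcal{U}$ having a cycle of pattern $A$. Indeed, using the conjugating homeomorphism $h:[P]\to [P']$ that sends $P$ to $P'$ and fixes branches (which exists precisely because the two cycles share pattern $A$), every covering relation $f(I)\supseteq J$ between basic intervals is forced to persist for $g$, because $g$ must agree with the affine interpolation at the endpoints of $I$ and the image $g(I)$ is a connected set containing those endpoints.

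Next I would invoke the standard loop-to-cycle correspondence: every admissible closed path $I_0 \to I_1 \to \cdots \to I_{n-1}\to I_0$ in the Markov graph yields, via nested preimages and the intermediate value theorem applied on each basic interval, a periodic point $x$ of period dividing $n$ whose itinerary traverses the given loop. With appropriate care, taking a non-repeating loop that does not degenerate to the fixed point $a$ produces a genuine period-$n$ cycle, and the combinatorial type of this cycle — which branch each $I_j$ lies on, together with the cyclic order of visits along each branch — is determined entirely by the loop itself. Applying this to the cycle $Q$ of pattern $B$ found inside $f$ yields a loop $\mathcal{L}$ witnessing pattern $B$; by the minimality observation, $\mathcal{L}$ is also a loop in the Markov graph of every such $g$, and so $g$ carries a cycle of pattern $B$, completing the proof.

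The main obstacle I expect lies in two delicate combinatorial points specific to the triod setting: first, that the pattern extracted from a Markov loop depends \emph{only} on the loop and not on the particular map, so that the cycle produced in $g$ really has pattern $B$ rather than a refinement or sibling pattern; and second, that the branching point $a$ — where three branches meet and which all maps in $\mathcal{U}$ fix — does not spuriously contribute loops that fail to realise as genuine periodic orbits. Both issues are handled, as in \cite{alm98,BMR}, by restricting attention to admissible loops and exploiting the uniqueness of the fixed point at $a$ together with the fact that $f$ is constant on components of $T-[P]$, so no extraneous cycles are created outside $[P]$.
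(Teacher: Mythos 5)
First, a point of comparison: the paper does not prove Theorem \ref{forcing} at all — it is quoted from \cite{alm98,BMR} as background material — so there is no internal proof to measure your argument against; I can only assess it against the standard argument in those references, whose general shape you are clearly following. Your forward direction is correct and immediate. In the reverse direction, the transfer of covering relations is essentially right: if $I,J$ are $P$-basic intervals with $f(I)\supseteq J$, then for any $g\in\mathcal{U}$ with a cycle $P'$ of pattern $A$ and conjugating homeomorphism $h$, the set $g(h(I))$ is connected and contains the images of the endpoints of $h(I)$, hence contains the arc between them, hence contains $h(J)$; together with Theorem \ref{theorem:interval:graph} this produces a point $x$ with $g^n(x)=x$ whose itinerary through $P'$-basic intervals follows the transferred loop $\mathcal{L}$ of $Q$.

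The genuine gap is your central claim that the combinatorial type of the resulting cycle ``is determined entirely by the loop itself.'' It is not. A loop records only which basic interval each iterate visits; when several iterates of $x$ land in the same $P'$-basic interval, their relative order along the branch — which is exactly the data the pattern encodes — is not determined by $\mathcal{L}$, and in addition the point $x$ associated with a loop of length $n$ may have period a proper divisor of $n$. So the construction yields some cycle of $g$ with the right itinerary, but concluding that $g$ has a cycle of pattern $B$, rather than a shorter cycle or a different pattern sharing that itinerary, is precisely the hard content of the theorem. Your proposed remedy — restricting to admissible loops and invoking the uniqueness of the fixed point $a$, plus constancy of $f$ off $[P]$ — addresses neither difficulty: admissibility only says one endpoint of each interval is $a$, while both the period collapse and the order ambiguity occur inside the branches, away from $a$. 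Resolving this requires the substantially finer analysis carried out in \cite{alm98} (a careful, e.g.\ extremal, choice of the periodic point associated with the loop, control of repetitive loops to pin down the period, and comparison with the $P'$-linear model to pin down the pattern), so as written your argument defers the essential step to the citation rather than proving it.
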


 For a cycle $P$, call a map $f$, $P$-\emph{adjusted} if $f$ has no cycle other than $P$  with the same \emph{pattern} as $P$.

 \begin{theorem}[\cite{alm98}]\label{P:adjusted}
 	For a cycle $P$ of  a map $f \in \mathcal{U}$, there exists a $P$-adjusted  map $g $ which agrees with  $f$ on $P$, that is, $f |_P = g |_P$. 
 \end{theorem}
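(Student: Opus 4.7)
The plan is to construct $g$ by starting from the canonical $P$-linear map associated with $f|_P$ and then perturbing its slopes on selected arcs of $[P] \setminus P$ so as to destroy every cycle of the pattern of $P$ other than $P$ itself.

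First, I would define $f_0 \in \mathcal{U}$ to be the $P$-linear map with $f_0|_P = f|_P$. By Theorem \ref{forcing}, the cycles of $f_0$ realize precisely the patterns forced by the pattern of $P$. I would next analyze cycles $Q$ of $f_0$ with the same pattern as $P$. Because such a $Q$ must occupy the same combinatorial slots on the branches of $T$ as $P$, and $f_0$ is affine on every component of $[P] \setminus (P \cup \{a\})$ and constant on each component of $T \setminus [P]$, any such $Q$ corresponds to a fixed point of an affine first-return map on a component of $[P] \setminus P$. This return map has a unique fixed point, which must then coincide with a point of $P$, unless its multiplier equals $1$, in which case $P$ lies in a one-parameter continuum of pattern-$[P]$ cycles.

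Next, I would perform a slope perturbation. On a single chosen arc $C \subset [P] \setminus P$, I would replace the affine piece of $f_0$ by a piecewise linear map with the same endpoint values $f(p), f(p')$ but a slightly different slope on a subinterval. This changes the multiplier of every relevant return map passing through $C$ to a value different from $1$, collapsing any continuum to its unique fixed point, which by construction is a point of $P$. Since the number of combinatorial configurations that could support an extraneous cycle of pattern $[P]$ is finite, finitely many such perturbations suffice to eliminate every extraneous cycle of pattern $[P]$.

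The main obstacle will be ensuring that these perturbations do not accidentally introduce new cycles of the pattern of $P$, move points off of $P$, or violate membership in $\mathcal{U}$. I would handle this by keeping each perturbation small enough to preserve the monotonicity structure of $f_0$ on each arc of $[P] \setminus P$; with that, $g$ and $f_0$ have the same combinatorial dynamics on $[P]$, so $g$ cannot acquire cycles of pattern $[P]$ that were absent from $f_0$, and the central point $a$ remains the unique fixed point since it is untouched by the slope perturbation. This produces the desired $P$-adjusted map $g$ with $g|_P = f|_P$.
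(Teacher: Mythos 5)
This statement is quoted by the paper from \cite{alm98} without proof, so there is no in-paper argument to compare against; judged on its own terms, your proposal has a genuine gap at its central step. You assert that any cycle $Q$ of the $P$-linear map $f_0$ with the same pattern as $P$ ``must occupy the same combinatorial slots'' as $P$, i.e.\ that $Q$ runs through the $P$-basic intervals along the same route as $P$ itself, so that the relevant first-return map is affine and already fixes a point of $P$. But having the same pattern only means there is a branch-preserving homeomorphism of $[Q]$ onto $[P]$ conjugating the two cycles; it does not, without argument, determine where the points of $Q$ sit relative to the points of $P$, nor which basic intervals the orbit of $Q$ visits. If a same-pattern cycle could follow a different route, its return map would generically have multiplier different from $1$ and its unique fixed point need not lie on $P$, so it would be untouched by your multiplier-$\neq 1$ perturbations and your scheme offers no mechanism to remove it while keeping $g|_P=f|_P$. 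Establishing this ``shadowing'' property (or otherwise excluding such cycles) is essentially the substance of the theorem, and it is precisely the part you assume.

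Even in the situation your argument does address (a continuum of same-pattern cycles coming from a return map equal to the identity), the proposed fix is too local. Take the primitive $2$-cycle $P=\{p_1,p_2\}$ with $p_1\in b_0$, $p_2\in b_1$: for the $P$-linear map, $f_0^2$ is the identity on $[a,p_1]$, and if you alter the slope only on a subinterval $J\subset (a,p_1)$ while leaving the branch $b_1$ untouched, then every $x\in (a,p_1)\setminus J$ still satisfies $g^2(x)=f_0^2(x)=x$, so almost the entire continuum of extraneous $2$-cycles survives; destroying it forces a modification on essentially the whole arc. After any such modification, the closing claim that ``$g$ and $f_0$ have the same combinatorial dynamics on $[P]$, so $g$ cannot acquire cycles of pattern $[P]$'' is also not sound: preserving the monotonicity (covering) structure guarantees, via Theorem \ref{theorem:interval:graph}, that cycles associated with each loop still exist; it does not rule out extra fixed points of the perturbed, now only piecewise affine, return maps, which is what must actually be checked route by route. (A similar looseness affects membership in $\mathcal{U}$: the danger is not at $a$ but that a perturbed piece on an interval mapped into its own branch could cross the diagonal and create a second fixed point.) So as written the construction needs both the missing shadowing lemma and a global verification of the modified return maps before it yields a $P$-adjusted map.
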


 \subsection{Loops of Intervals}(\cite{alm00, alm98})\label{loops} 
 For $x,y \in T$ lying in the same \emph{branch}, we call the convex hull $[x,y]$ of $x$ and $y$, an \emph{interval} on $T$ connecting $x$ and $y$. Let $P$ be a cycle on $T$ and let $f$ be the $P$-\emph{linear} map. Each component of $[P] - (P \cup \{ a\})$ is called a $P$-\emph{basic \emph{interval}} on $T$.
 
  We say that an \emph{interval} $J$ on $T$ $f$-\emph{covers} an \emph{interval} $K$ on $T$ if $f(J) \supseteq K$. Let $\{I_n\}$, $n\in \mathbb{N}$ be a sequence of \emph{interval}s such that $I_j$ $f$-\emph{covers} $I_{j+1}$,  for all $ j \in \mathbb{N} $; then we say that $\{I_n\}$, $n\in \mathbb{N}$ is an \emph{$f-$chain} of \emph{intervals} on $T$.  If a finite $f$-\emph{chain} $\alpha = \{I_0, I_1, \dots , I_{k-1} \}$ of \emph{intervals} on $T$ is such that $I_{k-1}$ also $f$-\emph{covers} $I_0 $, then we call $\alpha = \{I_0, I_1, \dots , I_{k-1} \}$, an
 $f$-\emph{loop} of \emph{intervals} on $T$. Call an \emph{interval} on $T$ \emph{admissible}  if one of its end-points is $a$. We  call a  \emph{chain} (a \emph{loop}) of \emph{admissible} \emph{intervals}  on $T$,   an \emph{admissible loop} (\emph{chain}) on $T$  respectively.  For a cycle $P$ and any $x \in P$, the \emph{loop} of \emph{intervals} $ \Gamma : [x,a]\to [f(x), a] \to [f^2(x), a] \to \dots [f^{n-1}(x), a] \to [x,a], x \in P$ is called the \emph{fundamental admissible loop} of \emph{intervals} \emph{associated} with the cycle $P$. The following result is well-known:

 \begin{theorem}[\cite{alm98, zie95}]\label{theorem:interval:graph}
 	
 	For a loop of interval	$ \Gamma: I_0 \to I_1 \to $ $    \dots $ $  I_{q-1}  \to I_0 $ of length $q$ on $T$,  there exists a point $x_0 \in I_0$ satisfying $f^i(x_0) \in I_{i} $ for $ i \in \{0, 1, 2, $ $ \dots q-1\}$ and $f^q(x_0) = x_0$. 
 \end{theorem}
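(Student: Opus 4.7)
I would adapt the classical itinerary-lemma argument from interval dynamics to the triod $T$: pull the loop back inductively to extract a subinterval $L_0 \subseteq I_0$ whose $q$-th image covers $I_0$, and then run an intermediate value argument on $L_0$ to find the fixed point. The chief subtlety beyond the $\R$-line case is that $f$ may move pieces of an interval across the branching point $a$, so both the pull-back and the final IVT must be set up to track a single branch at a time.

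The first step is a covering lemma on $T$: whenever intervals $J$ and $K$ on $T$ satisfy $f(J) \supseteq K$, there is a subinterval $L \subseteq J$ with $f(L) = K$. Let $B_K$ be the branch carrying $K$, and introduce a continuous signed coordinate $\psi : T \to \R$ equal to $\mathrm{dist}(\cdot,a)$ on $B_K$ and to $-\mathrm{dist}(\cdot,a)$ elsewhere. Applying the classical real-line itinerary lemma to $\psi \circ f : J \to \R$ with target $\psi(K) \subseteq [0,\infty)$ yields $L \subseteq J$ with $\psi(f(L)) = \psi(K)$; nonnegativity of $\psi(K)$ together with $\psi$ being a bijection on $B_K \cup \{a\}$ then forces $f(L) = K$.

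Using the covering lemma backwards along the loop, starting from $f(I_{q-1}) \supseteq I_0$, I extract $L_{q-1} \subseteq I_{q-1}$ with $f(L_{q-1}) = I_0$, and inductively $L_j \subseteq I_j$ with $f(L_j) = L_{j+1}$ for $j = q-2, \ldots, 0$. This produces $L_0 \subseteq I_0$ with $f^j(L_0) = L_j \subseteq I_j$ for each $j$ and $f^q(L_0) = I_0 \supseteq L_0$. To locate the fixed point, let $B$ be the branch containing $I_0$, parameterize $L_0 \subseteq B$ as $[p,r]$ with $0 \leq p < r$, and let $\phi : T \to \R$ be the signed distance with respect to $B$ as above. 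Choose $u,v \in L_0$ with $f^q(u) = p$ and $f^q(v) = r$ (available from $f^q(L_0) \supseteq L_0$); then $g(x) := \phi(f^q(x)) - \phi(x)$ satisfies $g(u) \leq 0 \leq g(v)$, and the IVT yields $x_0 \in L_0$ with $g(x_0) = 0$. Since $\phi(x_0) \geq p \geq 0$, also $\phi(f^q(x_0)) \geq 0$, so $f^q(x_0) \in B \cup \{a\}$; being at the same $\phi$-value as $x_0$, it must equal $x_0$. The itinerary $f^j(x_0) \in L_j \subseteq I_j$ is built in by construction.

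The point I expect to be most delicate is the covering lemma: one must check that the subinterval $L$ has image exactly $K$ (not merely $\supseteq K$), which hinges on $\psi$ collapsing all other branches into the negative half-line so that any excursion of $f(L)$ outside $B_K$ would register as a negative $\psi$-value, contradicting $\psi(f(L)) = \psi(K) \subseteq [0,\infty)$. Everything else is a mechanical transcription of the standard interval-dynamics argument.
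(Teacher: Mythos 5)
The paper does not prove this statement at all --- it is quoted as a well-known result with a citation to \cite{alm98, zie95} --- and your argument is essentially the standard proof found there: pull back sub-intervals along the loop via the covering lemma ($f(L_j)=L_{j+1}$, $f^q(L_0)=I_0\supseteq L_0$) and finish with an intermediate value argument, your signed-distance coordinate handling the branching point correctly. The proof is sound (the only cosmetic omission is the trivial degenerate case $p=r$, where $L_0$ and hence $I_0$ is a single point and the conclusion is immediate), so nothing needs to be added.
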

 
 \begin{remark}\label{loop:associated:point}
 	We call such a point $x_0$, the point \emph{associated} with the \emph{loop} of 
 	\emph{intervals}, $\Gamma$. 
 \end{remark}

 \subsection{Rotation Theory for triods}
 Now, we are in a position to state the rotation theory for \emph{triods} as introduced in \cite{BMR}. In our model of  $T$  we consider $T$ as being embedded into the plane with the central \emph{branching} point at the origin and \emph{branches} being segments of straight-lines. Let us name the \emph{branches} of $T$ in the anticlockwise direction such that $B = \{ b_i | i = 0,1,2 \}$  (addition in the subscript of $b$  is modulo 3) is the collection of all its \emph{branches}.
 
 Let $ f \in \mathcal{U}$ and  $P \subset T- \{a \}$  be finite. By an \emph{oriented graph} corresponding to $P$,  we shall mean a graph $G_P$ whose vertices are elements of $P$ and arrows are defined as follows. For $x,y \in P$, we will say that there is an arrow from $x$ to $y$ and write $x \to y$ if there exists $ z \in T$ such that $ x \geqslant z$ and $ f(z) \geqslant y$. We will refer to a \emph{loop}  in the \emph{oriented graph} $G_P$ as a \emph{``point" loop} in $T$. We call a \emph{point loop} in $T$ \emph{elementary} if it passes through every vertex of $G_P$ at most once. If $P$ is a cycle of period $n$, then the \emph{loop} $\gamma : x \to f(x) \to f^2(x) \to f^3(x) \to \dots f^{n-1}(x) \to x$, $x \in P$ is called the \emph{fundamental point loop} associated with $P$

 Let us assume that the \emph{oriented graph} $G_P$ corresponding to $P$ is transitive (that is,  there is a path from every vertex to every vertex). Observe that, if $P$ is a cycle, then $G_P$ is always transitive.  Let $A$ be the set of all arrows of the \emph{oriented graph} $G_P$. We define a \emph{displacement function} $ d : A \to \mathbb{R}$  by $d(u \to v) = \frac{k}{3}$ where $ u \in b_i$ and $v \in b_j$ and $ j = i +k $ (modulo 3).  For a \emph{point} \emph{loop} $\Gamma$ in $G_P$ denote by $ d(\Gamma)$ the sum of the values of the \emph{displacement} $d$ along the \emph{loop}. In our model of $T$, this number tells us how many times we revolved around the origin in the anticlockwise sense. Thus, $ d(\Gamma)$ is an integer. We call  $ rp(\Gamma) = (d(\Gamma), |\Gamma|)$  and $ \rho(\Gamma) = \frac{d(\Gamma)}{ |\Gamma|}$, the \emph{rotation pair} and \emph{rotation number} of the \emph{point loop}, $\Gamma$  respectively (where $|\Gamma|$ denotes length of $\Gamma$).  
 
 The closure of the set of \emph{rotation numbers} of all \emph{loops} of the \emph{oriented graph},  $G_P$ is called the \emph{rotation set} of $G_P$ and denoted by $L(G_P)$. By \cite{zie95}, $L(G_P)$ is equal to the smallest interval containing the \emph{rotation numbers} of all \emph{elementary loops} of $G_P$. \emph{Rotation pairs} can be represented in an interesting way using the notations used in  \cite{BMR}. We  represent the \emph{rotation pair} $rp(\Gamma) = (mp, mq)$ of a \emph{point loop} $\Gamma$ in $G_P$ with $p,q$ co-prime and $m$ being some natural number as a pair $ (t, m)$ where $ t = \frac{p}{q}$. We call the latter pair, the \emph{modified-rotation pair(mrp)} of the \emph{point loop} $\Gamma$ and write $mrp(\Gamma) = (t, m)$.

 The  \emph{rotation number}, \emph{rotation pair} and \emph{modified rotation pair} of a cycle $P$ are defined as those of  the \emph{fundamental point loop} $\Gamma_P$, corresponding to $P$. Similarly, the \emph{rotation number, rotation pair} and \emph{modified rotation pair} of a \emph{pattern} $A$ are defined as those of any cycle $P$, which \emph{exhibits} $A$. The \emph{rotation interval} \emph{forced} by a \emph{pattern} $A$ is defined to be the \emph{rotation set} $L(G_P)$ of the \emph{oriented graph} $G_P$ corresponding to the \emph{cycle} $P$, where $P$ exhibits $A$. The set of \emph{modified rotation pairs} of all \emph{patterns forced} by a \emph{pattern} $A$ is denoted by $mrp(A)$.

 \emph{Modified rotation pairs} can be visualized as follows. Think of the real line with a prong attached at each rational point and the set $\mathbb{N}
 \cup \{2^\infty\} $ marked on this prong in the Sharkovsky ordering $ \sha$ with $1$ closest to the real line and $3$ farthest from it. All points of the real line are marked $0$; at irrational points we can think of degenerate prongs with only $0$ on them. The union of all prongs and the real line is denoted by $\mathbb{M}$. Then,  a \emph{modified rotation pair} $(t,m)$ corresponds to the specific element of $ \mathbb{M}$, namely to the number $m$ on the prong attached at $t$.  However, no \emph{rotation pair} corresponds to $(t,2^{\infty})$ or to $(t,0)$. Then, for  $(t_1, m_1) $ and  $  (t_2,m_2)$, in $\mathbb{M}$, the convex hull $[(t_1, m_1), $ $  (t_2,m_2)]$  consists of all \emph{modified rotation pairs} $(t,m)$ with $t$ strictly between $t_1$ and $t_2$ or $t=t_i$ and $m \in Sh(m_i)$ for $ i=1,2$.

 A \emph{pattern} $A$ is called \emph{regular} if $A$ doesn't \emph{force} a \emph{primitive pattern} of period $2$; call a cycle $P$ \emph{regular} if it exhibits a \emph{regular pattern}.  By transitivity of \emph{forcing}, \emph{patterns forced} by a \emph{regular pattern} are \emph{regular}. A map $f \in \mathcal{U}$ will be called \emph{regular} if all its cycles are \emph{regular}.

 \begin{theorem}[\cite{BM2}]\label{result:1}

 	Let $A$ be a regular pattern for a map $f \in \mathcal{U}$. Then there are patterns $B$ and $C$ with modified rotation pairs $(t_1, m_1)$ and $(t_2,m_2)$ respectively such that $mrp(A) = [(t_1, m_1), (t_2, m_2)]$. 
 \end{theorem}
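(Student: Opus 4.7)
My plan is to translate the statement into one about point loops in the oriented graph $G_P$, where $P$ is a cycle exhibiting $A$, via Theorems \ref{forcing} and \ref{theorem:interval:graph}. Let $f$ be the $P$-linear map; by Theorem \ref{forcing}, the patterns forced by $A$ are precisely the patterns of cycles of $f$, and the mrp of any such cycle coincides with that of its fundamental point loop in $G_P$. Conversely, each point loop $\Gamma$ in $G_P$ lifts to an admissible loop of intervals in the sense of Section \ref{loops}, and Theorem \ref{theorem:interval:graph} produces a cycle whose fundamental point loop is $\Gamma$ (using Theorem \ref{P:adjusted} to avoid degenerate repetitions). Hence $mrp(A)$ is exactly the set of mrps of point loops in $G_P$, and Ziemian's theorem \cite{zie95} supplies $L(G_P)=[t_1,t_2]$ as a closed interval whose endpoints are realized by elementary loops.

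For the $\supseteq$ inclusion at interior rationals, given $t=p/q\in(t_1,t_2)$ with $\gcd(p,q)=1$ and any $m\in\mathbb{N}$, I would concatenate elementary loops of $G_P$ whose rotation numbers bracket $t$ from above and below to build a point loop of length $mq$ and rotation number $t$, arranging the concatenation so that the resulting loop has primitive period $mq$ (a standard combinatorial manipulation available once both brackets are present). Theorem \ref{theorem:interval:graph} then extracts a cycle whose pattern is forced by $A$ and has mrp $(t,m)$. The $\subseteq$ inclusion on the interior is immediate from $L(G_P)=[t_1,t_2]$.

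The delicate part, and the main obstacle, is the endpoint case $t=t_i$: showing that $S_i:=\{m\in\mathbb{N}:(t_i,m)\in mrp(A)\}$ equals $Sh(m_i)$ for some $m_i\in\mathbb{N}$, and exhibiting patterns $B$ and $C$ realizing $(t_1,m_1)$ and $(t_2,m_2)$. My approach would be to isolate, at each endpoint $t_i$, a combinatorial block built from the elementary loops of $G_P$ attaining rotation number $t_i$, and to show that the dynamics on this block reduces to a continuous one-dimensional interval map; applying the classical Sharkovsky Theorem \ref{t:shar} to this reduction then supplies a Sharkovsky-minimum $m_i\in\mathbb{N}$ and realizes every $m\in Sh(m_i)$ as a second coordinate. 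The regularity hypothesis on $A$ is essential here: it rules out the pathological possibility that $S_i=\{1,2,4,8,\dots\}$ has no Sharkovsky-minimum in $\mathbb{N}$ — a situation corresponding to the formal symbol $2^\infty$, for which no actual pattern exists and which would violate the conclusion.
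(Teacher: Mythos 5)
There is nothing in the paper to compare your argument against: Theorem \ref{result:1} is not proved in this paper at all, but quoted from Blokh and Misiurewicz (cited as \cite{BM2}; the triod version of the machinery is developed in \cite{BMR}). So the only question is whether your sketch would stand on its own, and as it is written it has a genuine gap precisely at the point you yourself flag as ``the delicate part.'' The whole content of the theorem beyond Ziemian's statement $L(G_P)=[t_1,t_2]$ is the endpoint behavior: that at each endpoint $t_i$ the set of realized second coordinates is exactly a Sharkovsky initial segment $Sh(m_i)$ with $m_i\in\mathbb{N}$, and that there exist actual patterns $B$ and $C$ attaining $(t_1,m_1)$ and $(t_2,m_2)$. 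Your proposal for this step --- ``isolate a combinatorial block built from the elementary loops attaining rotation number $t_i$'' and ``show that the dynamics on this block reduces to a continuous one-dimensional interval map'' --- is not an argument but a statement of intent: you neither construct the reduction nor explain why the loops of $G_P$ with extreme rotation number organize themselves into something to which Theorem \ref{t:shar} applies, and your explanation of where regularity enters (ruling out $S_i=\{1,2,4,\dots\}$) is a guess rather than a deduction. In the Blokh--Misiurewicz development this is exactly where the real work lies, and it uses the structure of twist-type loops at the endpoints, not a formal appeal to Sharkovsky.

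A second, smaller gap sits in your identification of $mrp(A)$ with the set of mrps of point loops of $G_P$. Theorem \ref{theorem:interval:graph} gives a periodic point $x_0$ following a loop of length $q$, but its period may be a proper divisor of $q$, so the cycle you extract need not have the rotation \emph{pair} (in particular the multiplicity $m$) of the loop you built; this matters both for your interior construction of $(t,m)$ with prescribed $m$ and for the endpoint case. Theorem \ref{P:adjusted} does not repair this: it only says one can modify $f$ off $P$ so that no other cycle shares the pattern of $P$; it does not prevent the associated periodic point of a concatenated loop from collapsing to a shorter cycle. The standard fix (choosing the concatenation so that a collapse would force a rotation number different from $t$, or arguing directly with the combinatorics of the loop) needs to be carried out, not merely invoked as ``a standard combinatorial manipulation.'' The conversely-direction claim, that every cycle of the $P$-linear map yields a loop in $G_P$ with the same modified rotation pair, also deserves a proof rather than an assertion, since the vertices of $G_P$ are points of $P$ and a forced cycle $Q$ is generally disjoint from $P$.
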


 \subsection{Triod-twist patterns}\label{section:triod:twist:color}
 
A \emph{regular pattern} $A$ is called a \emph{triod twist pattern} if it doesn't \emph{force} another \emph{pattern} with the same \emph{rotation number}.  To describe the dynamics of \emph{triod-twist patterns} we need to  use the following \emph{color} notations from \cite{BMR}. An arrow $ u \to v$ in $G_P$ where $u,v \in P$ will be called  \emph{green}, \emph{black} or \emph{red} according as $d(u \to v) =  0,  \frac{1}{3}$ or $\frac{2}{3}$ respectively.  If $P$ is a cycle of $f \in \mathcal{U}$, then we define the \emph{color} of a point $x \in P$ to be the \emph{color} of the arrow $ x \to f(x)$ in its \emph{fundamental point loop}. For an ordering  $\{ b_i | i = 0,1,2 \}$  (addition in the subscript of $b$  is modulo 3) of the \emph{branches} of the \emph{triod} and for a \emph{regular cycle} $P$, we denote by $p_i$, the point of  $P$ closest to the \emph{branching point} $a$ in each \emph{branch} $b_i$.

\begin{theorem}[\cite{BB5}]\label{canonical:numberinh}
For any regular cycle $P$, there exists a  ordering  $\{b_i | i = 0,1,2\}$  (addition in the subscript of $b$  is modulo 3) of the branches of the triod such that $p_i,$  $i = 0,1,2$ are all black. We call this ordering, the canonical ordering of the branches of $T$. 
\end{theorem}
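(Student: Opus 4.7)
Plan: Define $\pi:\{0,1,2\}\to\{0,1,2\}$ by $f(p_i)\in b_{\pi(i)}$. Then the arrow $p_i\to f(p_i)$ in $G_P$ is black under a labeling $b_0,b_1,b_2$ precisely when $\pi(i)=i+1\pmod 3$. The entire content of the theorem is that $\pi$ is always a cyclic permutation of $\{0,1,2\}$; once this is established, rotating the choice of which branch is called $b_0$ (and, if the cyclic direction of $\pi$ happens to be the opposite one, reversing the anti-clockwise convention, which is also a valid ``ordering'' of the branches) converts the shift $i\mapsto i+2$ into $i\mapsto i+1$, making every $p_i$ black.

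To show $\pi$ is cyclic, I would argue by contradiction. The first observation is that $p_{\pi(i)}$ is the closest point of $P$ to $a$ on $b_{\pi(i)}$ and $f(p_i)\in P\cap b_{\pi(i)}$, so $f(p_i)\geqslant p_{\pi(i)}$, and hence the admissible interval $[a,p_i]$ $f$-covers the admissible interval $[a,p_{\pi(i)}]$. A short case analysis on the failure modes of cyclicity (a fixed point of $\pi$; non-injectivity of $\pi$) produces indices $i\ne j$ and an $f$-loop of admissible intervals $[a,p_i]\to[a,p_j]\to[a,p_i]$ of length $2$ whose displacement sum is $1$; by Theorem \ref{theorem:interval:graph} this loop carries an associated periodic point.

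The delicate step is to ensure that this associated periodic point can be chosen to be a genuine period-$2$ point rather than the trivial fixed point $a$ (which lies in both admissible intervals and provides a spurious associated point). I would handle this by upgrading the admissible loop to one containing a \emph{non-admissible} basic interval $J\subset b_i$ beyond $p_i$, whose two $P$-endpoints are mapped by $f$ to different branches. Such a $J$ must exist whenever $\pi$ is non-cyclic, since otherwise the $f$-images of consecutive $P$-points on each branch would all share a common branch index and force $\pi$ itself to be cyclic. The image $f(J)$ then wraps through $a$, and a slope computation shows that $f^2|_J$ has a fixed point $x_0$ in the interior of $J$, so $x_0\ne a$. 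Together with its image $f(x_0)$ on a different branch, $\{x_0,f(x_0)\}$ is a primitive period-$2$ cycle of the $P$-linear map $f$. By Theorem \ref{forcing} the pattern of $P$ forces a primitive period-$2$ pattern, contradicting the regularity of $P$.

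The main obstacle is precisely this upgrade: constructing the non-admissible interval $J$ with the required wrap-around behavior and carrying out the slope estimate guaranteeing a non-trivial $f^2$-fixed point. Each failure-of-cyclicity sub-case for $\pi$ must be treated individually, and in every one the existence of such a $J$ must be extracted from the structural constraints that non-cyclicity of $\pi$ imposes on the $P$-linear map $f$.
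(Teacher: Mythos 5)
First, a caveat: the paper does not prove this statement at all --- it is imported verbatim from \cite{BB5} --- so there is no in-paper proof to compare against; what follows judges your argument on its own terms. Your overall strategy (show that the branch-transition map $\pi$ on the innermost points $p_i$ is a $3$-cycle, then relabel, allowing an orientation-reversing relabeling) is a sensible shape for a proof, and the final step ``interior period-$2$ point on one branch with image on another branch gives, via Theorem \ref{forcing}, a forced primitive period-$2$ pattern, contradicting regularity'' is fine. But there are two genuine gaps. The first is the fixed-point case of $\pi$, i.e.\ a green $p_i$: this does \emph{not} produce a length-$2$ admissible loop of displacement $1$ (think of $\pi$ with a fixed point and no $2$-cycle, e.g.\ $\pi$ constant), it only produces the loop $[a,p_i]\to[a,p_i]$ of displacement $0$, from which no contradiction with regularity follows by your mechanism. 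The correct way to exclude a green $p_i$ does not use regularity at all: for $f\in\mathcal{U}$ the point $a$ is the \emph{unique} fixed point, and an intermediate-value argument along the branch shows that no point of a cycle can be mapped to a point of the same branch farther from $a$; since minimality of $p_i$ would force $f(p_i)\geqslant p_i$ if $f(p_i)$ stayed on $b_i$, a green $p_i$ is impossible outright. (This realizability constraint is also what saves the theorem from apparent ``counterexamples'' such as $x_1\mapsto x_2\mapsto y\mapsto z\mapsto x_1$ with $x_1<x_2$ on one branch: its $P$-linear model has a green innermost point and no primitive $2$-cycle, but it has a fixed point other than $a$ and hence never occurs for maps in $\mathcal{U}$.) Your proposal contains no substitute for this step.

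The second gap is the one you yourself flag as the main obstacle, and the mechanism you sketch for it is incorrect as stated. Producing a non-admissible basic interval $J$ whose two $P$-endpoints map to different branches does not give $f^2(J)\supseteq J$: ``$f(J)$ wraps through $a$'' only says $f(J)$ contains two admissible intervals, whose images need not reach back out to $J$, so there is no slope computation to run. What is actually needed is an honest $2$-loop $J\to K\to J$ with $J\subset b_j$, $K\subset b_k$, $a\notin J$ (then the associated point is automatically a primitive period-$2$ point), and extracting such a loop from the hypothesis $\pi(j)=k$, $\pi(k)=j$ is precisely the substantive content of the theorem. Concretely, for the $6$-cycle $x_1\to y_1\to x_2\to y_2\to z_2\to z_1\to x_1$ (with $x_1<x_2$ in $b_0$, $y_1<y_2$ in $b_1$, $z_1<z_2$ in $b_2$) one has $\pi(0)=1$, $\pi(1)=0$; the admissible $2$-loop $[a,x_1]\to[a,y_1]\to[a,x_1]$ has only $a$ as associated point (both restrictions are expanding affine maps fixing $a$), the interval $[z_1,z_2]$ has endpoints mapping to different branches yet lies on no $2$-loop at all (indeed $f^2([z_1,z_2])$ misses $[z_1,z_2]$ entirely), and the primitive $2$-cycle that does exist comes from $[x_1,x_2]\to[y_1,y_2]\to[x_1,x_2]$, whose first interval has both endpoints mapping to the \emph{same} branch. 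So your recipe can select a $J$ on which the argument has nothing to bite, and the case analysis that is supposed to supply the right pair of intervals is missing. As it stands the proposal is an outline whose two critical steps --- excluding green innermost points, and converting the branch-swap configuration into a genuine primitive $2$-cycle --- are not established.
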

	 From now on throughout the rest of the paper, we will assume that the \emph{branches} has been \emph{canonically ordered}. Then, the subsequent result follows. 

\begin{theorem}[\cite{BB5}]\label{rot:one:third}
	The pattern $\pi_3$, associated with primitive cycle of period $3$  is the unique triod twist pattern with rotation number $\frac{1}{3}$. 
\end{theorem}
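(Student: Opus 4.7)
The plan is to prove the statement in two parts: first, verify that $\pi_3$ is itself a \emph{triod twist} with \emph{rotation number} $\tfrac{1}{3}$, and second, show that any other \emph{triod twist} of \emph{rotation number} $\tfrac{1}{3}$ coincides with $\pi_3$. For the first part, I would examine the \emph{oriented graph} $G_P$ for a cycle $P$ exhibiting $\pi_3$. Since $P$ is \emph{primitive} of period $3$ and the branches are \emph{canonically ordered}, the three points $p_0,p_1,p_2$ lie one on each \emph{branch} and are all \emph{black}, which forces the arrows of $G_P$ to be exactly $p_i \to p_{i+1}$. The fundamental loop is then the only \emph{elementary loop}, so $L(G_{\pi_3}) = \{\tfrac{1}{3}\}$ and $\pi_3$ forces only itself; hence $\pi_3$ is a \emph{triod twist} with \emph{rotation number} $\tfrac{1}{3}$.

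For the uniqueness, let $A$ be any \emph{triod twist} of \emph{rotation number} $\tfrac{1}{3}$, so $mrp(A) = (\tfrac{1}{3}, m_A)$ for some $m_A \in \mathbb{N}$. Triod twists are regular by definition, so Theorem \ref{result:1} gives $mrp(A) = [(t_1,m_1),(t_2,m_2)]$ with $t_1 \le \tfrac{1}{3} \le t_2$. The decisive observation is that $1$ is the terminal element of the Sharkovsky ordering, so $1 \in Sh(m)$ for every $m \in \mathbb{N} \cup \{2^\infty\}$. Therefore, regardless of whether $\tfrac{1}{3}$ lies in the interior of $[t_1,t_2]$ or at an endpoint, the pair $(\tfrac{1}{3},1)$ belongs to $mrp(A)$, so $A$ forces some \emph{pattern} $B$ with $mrp(B) = (\tfrac{1}{3},1)$. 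This $B$ has period $3$ and \emph{rotation number} $\tfrac{1}{3}$, and is \emph{regular} since regularity is preserved under forcing. If I can show $B = \pi_3$, then $A$ forces $\pi_3$, and the \emph{triod twist} property of $A$ (combined with $\pi_3$ having the same \emph{rotation number}) forces $A = \pi_3$.

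The crux is therefore the classification: the only \emph{regular} pattern of period $3$ with \emph{rotation number} $\tfrac{1}{3}$ is $\pi_3$. A period-$3$ cycle entirely on one \emph{branch} has \emph{rotation number} $0$, so any other candidate is either \emph{primitive} (which yields exactly $\pi_3$) or non-\emph{primitive} with two points on one branch, say $x_0 < y_0$ on $b_0$, and one point $z_1$ on another, say $b_1$. There are two essentially different cyclic orders on such a triple. For one, e.g.\ $x_0 \to y_0 \to z_1 \to x_0$, continuity of $f$ and the intermediate value theorem applied on $[x_0,y_0] \subset b_0$ (where $f(x_0) = y_0 \in b_0$ and $f$ must first reach $a$ before leaving $b_0$) force an additional fixed point of $f$ in $(x_0,y_0)$, violating uniqueness of $a$; so this pattern is not exhibited by any map in $\mathcal{U}$ at all. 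For the other order, e.g.\ $y_0 \to x_0 \to z_1 \to y_0$, the associated $P$-\emph{linear} map does lie in $\mathcal{U}$, but a direct piecewise-affine computation solving the period-$2$ equations produces a \emph{primitive} period-$2$ cycle with one point on each of $b_0$ and $b_1$, so this pattern forces a \emph{primitive} period-$2$ pattern and fails to be \emph{regular}. I expect this explicit period-$2$ production, together with the case split between the two cyclic orders, to be the most delicate technical step; the remainder of the argument follows directly from Theorem \ref{result:1} and the tail structure of the Sharkovsky order.
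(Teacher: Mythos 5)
The paper itself does not prove Theorem \ref{rot:one:third}: it is imported from \cite{BB5} without proof, so there is no internal argument to compare yours against, and I can only judge the proposal on its own merits. On that basis your plan is essentially sound. The reduction via Theorem \ref{result:1} together with the fact that $1$ lies in $Sh(m)$ for every $m$ does yield a forced pattern $B$ with $mrp(B)=(\frac{1}{3},1)$, regularity passes to $B$, and the classification of period-$3$ cycles by branch distribution is the right skeleton. I checked both delicate steps: for the order $x_0\to y_0\to z_1$ the signed-distance IVT argument on $[x_0,y_0]$ does produce a fixed point distinct from $a$, so no map in $\mathcal{U}$ carries such a cycle; and for the order $y_0\to x_0\to z_1\to y_0$ the $P$-linear map does have a primitive period-$2$ orbit. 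For the latter, a cleaner route than solving the piecewise-affine period-$2$ equations is the admissible loop $[x_0,y_0]\to[a,z_1]\to[x_0,y_0]$: its associated periodic point lies in $b_0$ while its image lies in $b_1$, so it is neither $a$ nor a fixed point, and Theorem \ref{forcing} then shows the pattern forces a primitive period-$2$ pattern, hence is not regular.

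One step you should tighten is in Part 1, where you pass from $L(G_P)=\{\frac{1}{3}\}$ to ``$\pi_3$ forces only itself.'' A one-point rotation set only constrains the rotation numbers of forced patterns; a priori the $P$-linear map could still carry a cycle of rotation pair $(2,6)$, say, exhibiting a different pattern with the same rotation number $\frac{1}{3}$, which is precisely what the triod-twist property must rule out. The gap is easy to close: for the primitive $3$-cycle the $P$-linear map sends $[a,p_i]$ affinely onto $[a,p_{i+1}]$, so $f^3$ is the identity on $[P]$ and every cycle other than $\{a\}$ is again a primitive $3$-cycle exhibiting $\pi_3$; by Theorem \ref{forcing} this gives simultaneously the regularity of $\pi_3$ and the twist property, after which your uniqueness argument goes through. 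Finally, note that the uniqueness you obtain is uniqueness under the paper's standing canonical-ordering convention: in a fixed labeling the ``clockwise'' primitive $3$-cycle has rotation number $\frac{2}{3}$ and only becomes $\frac{1}{3}$ after canonical relabeling, which is what the statement intends, so your restriction to the anticlockwise primitive pattern is consistent but worth stating explicitly.
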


In \cite{BB5}, a bifurcation in the qualitative nature of a \emph{triod-twist pattern} (with respect to \emph{color} of points) at \emph{rotation number} $\frac{1}{3}$ was obtained. 

\begin{theorem}[\cite{BB5}]\label{bifurcation:one:third}
	Let $A$ be a triod twist pattern of rotation number $\rho$. Then, if $\rho <  \frac{1}{3}$, $A$ has no red points and if $\rho >  \frac{1}{3}$, $A$ has  no green points.

\end{theorem}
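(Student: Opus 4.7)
The strategy is a unified proof by contradiction that handles both halves of the dichotomy together. Assume $A$ is a triod twist pattern with rotation number $\rho \neq \tfrac{1}{3}$, and that $A$ possesses both a green and a red point; the goal is to exhibit a distinct pattern $B$ with $\rho_B = \rho$ that is forced by $A$, contradicting the twist property. The unified reduction is justified by a color count on the fundamental point loop: if $P$ is a cycle of pattern $A$ with $g$ green, $b$ black, $r$ red points and period $n$, then $\rho = (b+2r)/(3n)$ together with $g+b+r=n$ gives $r < g$ when $\rho < \tfrac{1}{3}$ and $g < r$ when $\rho > \tfrac{1}{3}$. Consequently a red point forces a green partner when $\rho < \tfrac{1}{3}$, and vice versa when $\rho > \tfrac{1}{3}$, so the assumption ``$A$ has a red point'' (resp.\ ``$A$ has a green point'') in each case yields both colors simultaneously.

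Fix such a pair: a green point $x$ and a red point $y$ in $P$, and take a $P$-linear representative $f$ of $A$. The heart of the argument is to splice the fundamental admissible loop of $P$ so that the green step at $x$ and the red step at $y$ are replaced by two black steps; this preserves both the length $n$ and the total displacement, since $0 + \tfrac{2}{3} = \tfrac{1}{3} + \tfrac{1}{3}$. Using the canonical ordering (Theorem \ref{canonical:numberinh}), the innermost points $p_0, p_1, p_2$ on the three branches are all black, and the admissible intervals $[a, p_i]$ enjoy a clean cyclic covering pattern under $f$; these intervals serve as ``pivots'' through which the orbit can be rerouted, replacing the one green and one red transition by two black ones without disturbing the remainder of the loop.

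Applying Theorem \ref{theorem:interval:graph} to the rerouted admissible loop produces a cycle $Q$ of $f$ with period dividing $n$ and rotation number $\rho$, whose color profile differs from that of $P$ by one less green, one less red, and two more black (or whose period is strictly smaller than $n$, if the new loop traverses a proper sub-cycle). The color distribution being an invariant of the pattern, the pattern $B$ of $Q$ is distinct from $A$ in the first case; in the second case $A$ and $B$ already have distinct periods. In either case, Theorem \ref{forcing} shows that $A$ forces $B$, and since $\rho_B = \rho_A = \rho$, this contradicts the assumption that $A$ is a triod twist.

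The main obstacle is the splicing step in the middle paragraph: one must verify that the basic intervals adjacent to $x$ and $y$ actually $f$-cover the admissible pivots $[a, p_i]$ in the required sense, and that the resulting rerouted loop is genuinely admissible and closes up after exactly $n$ steps. This combinatorial verification is the technical core of the argument and would be carried out by a case analysis on the positions of $x$ and $y$ with respect to the canonical ordering of the branches, exploiting both the blackness of the innermost points supplied by Theorem \ref{canonical:numberinh} and the affine structure of the $P$-linear map on basic intervals.
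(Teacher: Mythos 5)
This theorem is quoted by the paper from \cite{BB5} without proof, so your proposal has to stand on its own. Its opening reduction is correct and tidy: with $g$ green, $b$ black and $r$ red points on a cycle of period $n$ one gets $\rho=(b+2r)/(3n)$, hence $\rho<\tfrac13$ iff $r<g$ and $\rho>\tfrac13$ iff $g<r$, so in either case the presence of the forbidden color forces a green--red pair, and the overall strategy (produce a forced pattern with the same rotation number, contradicting the twist property) is the right kind of strategy.

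The genuine gap is the splicing step, which is the entire content of the theorem and which you only assert. As described it is not even branch-consistent: changing the single arrow at the green point $x$ from displacement $0$ to $\tfrac13$ moves the next admissible interval to the adjacent branch, and then every interval of the loop up to the compensating change at the red point $y$ must be replaced by an interval on the branch shifted by one; the ``remainder of the loop'' cannot be left undisturbed, so you must exhibit a whole shifted segment of admissible intervals together with the required $f$-covering relations. The pivots $[a,p_i]$ do not supply this: since $(a,p_i)$ is a $P$-basic interval and $p_i$ is black, the $P$-linear map sends $[a,p_i]$ onto $[a,f(p_i)]\subseteq b_{i+1}$, so $[a,p_i]$ $f$-covers $[a,p_{i+1}]$ but never an interval on its own branch; hence the shifted segment breaks at every green arrow of the original loop lying between $x$ and $y$, and such arrows are unavoidable when $\rho<\tfrac13$ (there $g>r$). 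So the verification you defer to ``a case analysis'' is not routine bookkeeping; it needs real structural input about twist cycles, of the kind provided by the greenness result (Theorem \ref{necesary:condition:triod:twist}) or the code-function characterization (Theorem \ref{tri-od:rot:twist:order:inv}), which is essentially how \cite{BB5} treats this circle of results. (The concluding step is fine: a point given by Theorem \ref{theorem:interval:graph} does inherit the loop's branch itinerary, hence its color profile, so the resulting pattern would indeed differ from $A$ --- but only once the rerouted loop actually exists.)
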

We now state a  necessary condition for a given \emph{pattern} $\pi$ to be a \emph{triod-twist pattern}. 
\begin{definition}[\cite{BB5}]\label{green}
	 A \emph{regular cycle} $P$ is called \emph{green} if for any two points $x,y \in P$, $ x > y $ such that $f(x)$ and $f(y)$ lie in the same \emph{branch} of $T$, we have $ f(x) >  f(y)$. Call a \emph{pattern} $A$ \emph{green},  if any cycle which exhibits it,  is \emph{green}. 	
\end{definition}
 
 \begin{theorem}[\cite{BB5}]\label{necesary:condition:triod:twist}
 	A triod-twist pattern is green. 
 \end{theorem}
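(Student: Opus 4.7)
The proof plan is by contradiction. Assume $A$ is a triod twist pattern that fails to be green, let $P$ be a cycle of period $n$ exhibiting $A$, and let $f$ be the $P$-linear map. After relabeling cyclically so that the witness of non-greenness starts at $x_0$, I obtain $P = \{x_0, x_1, \ldots, x_{n-1}\}$ with $x_{i+1} = f(x_i)$ and an index $k$, $0 < k < n$, such that $x_0$ and $x_k$ share a branch with $x_0 > x_k$, while $x_1$ and $x_{k+1}$ share a branch with $x_1 < x_{k+1}$. My goal is to exhibit a pattern $B \neq A$ with $\rho_B = \rho_A$ that is forced by $A$, which directly violates the definition of a triod twist pattern.

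From this crossing I manufacture two admissible loops of intervals. The inclusion $[x_0, a] \supseteq [x_k, a]$ immediately supplies the admissible loop
\[
\Gamma_1 : [x_0, a] \to [x_{k+1}, a] \to [x_{k+2}, a] \to \cdots \to [x_{n-1}, a] \to [x_0, a]
\]
of length $n - k$. Since $f([x_k, a])$ is a connected subset of $T$ containing the fixed point $a$ and the point $x_{k+1}$, and since $x_{k+1} > x_1$ on a common branch, the connected image must contain $[x_1, a]$; this supplies the admissible loop
\[
\Gamma_2 : [x_1, a] \to [x_2, a] \to \cdots \to [x_{k-1}, a] \to [x_k, a] \to [x_1, a]
\]
of length $k$. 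Because $x_0$ and $x_k$ share a branch, the two ``new'' arrows satisfy $d(x_0 \to x_{k+1}) = d(x_k \to x_{k+1})$ and $d(x_k \to x_1) = d(x_0 \to x_1)$. Summing displacements and lengths, $d(\Gamma_1) + d(\Gamma_2) = d(\Gamma)$ and $|\Gamma_1| + |\Gamma_2| = n$, so $\rho_A$ is a convex combination of $\rho(\Gamma_1)$ and $\rho(\Gamma_2)$.

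I then split into two cases. If $\rho(\Gamma_1) = \rho(\Gamma_2) = \rho_A$, then Theorem \ref{theorem:interval:graph} produces a periodic point associated to $\Gamma_1$ whose period divides $n - k < n$; its cycle is forced by $A$, has rotation number $\rho(\Gamma_1) = \rho_A$, and exhibits a pattern different from $A$ by period, contradicting triod twistness. Otherwise $\rho(\Gamma_1) \neq \rho(\Gamma_2)$ places $\rho_A$ strictly between them and hence strictly inside $L(G_P)$. Since triod twist patterns are regular, Theorem \ref{result:1} applies and yields $mrp(A) = [(t_1, m_1), (t_2, m_2)]$ with $\rho_A \in (t_1, t_2)$; by the description of convex hulls in $\mathbb{M}$, $mrp(A)$ contains $(\rho_A, m)$ for every $m \in \mathbb{N}$, and choosing $m$ different from the second coordinate of $A$'s own modified rotation pair furnishes a distinct forced pattern with rotation number $\rho_A$, again a contradiction.

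The principal obstacle in this plan is the verification $f([x_k, a]) \supseteq [x_1, a]$, the one covering relation not already present in the original fundamental loop; it rests on connectedness of $f([x_k, a])$ together with the crossing inequality $x_{k+1} > x_1$. Once this is established, the remainder is displacement bookkeeping together with the dichotomy above.
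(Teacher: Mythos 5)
This theorem is stated in the paper as a quoted result from \cite{BB5}; no proof of it appears in the present text, so there is no in-paper argument to compare against line by line. Taken on its own terms, your proof is the natural (and almost certainly the intended) one: split the fundamental loop at a greenness-violating crossing into the two admissible loops $\Gamma_1$ and $\Gamma_2$, use the fact that $x_0$ and $x_k$ lie in one branch to get $d(\Gamma_1)+d(\Gamma_2)=d(\Gamma)$ and $|\Gamma_1|+|\Gamma_2|=n$, so that $\rho_A$ is a genuine convex combination, and then run the dichotomy; your verification of the one new covering $f([x_k,a])\supseteq[x_{k+1},a]\supseteq[x_1,a]$ is correct, as is the displacement bookkeeping. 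Two caveats, both of which match conventions the paper itself uses without comment (so I would not count them as gaps relative to its standard, but you should be aware of them): first, Theorem \ref{theorem:interval:graph} as stated does not exclude that the associated periodic point is the branching fixed point $a$, and if it were, your Case 1 would produce no new cycle; the paper's own proofs of Theorems \ref{decreasing:conclusion}, \ref{equal:code:non:coprime} and \ref{non:decreasing:conclusion} apply Theorem \ref{theorem:interval:graph} in exactly the same unguarded way. Second, in Case 2 you need $\rho_A\in(t_1,t_2)$, which tacitly identifies the interval $[t_1,t_2]$ coming from Theorem \ref{result:1} with the rotation set $L(G_P)$ containing $\rho(\Gamma_1)$ and $\rho(\Gamma_2)$ (after passing from your interval loops to the corresponding point loops in $G_P$, which is immediate since each covering is witnessed by a point of $P$); this identification is from \cite{BMR,BM2} and is likewise used implicitly elsewhere in the paper. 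With those conventions granted, your argument is correct.
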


 We now introduce a special tool which gives us a complete characterization of a \emph{triod-twist pattern}. 
 
 \begin{definition}[\cite{BB5}]\label{code:function}
 	 The \emph{code function} for a cycle $P$  of a map $ f \in \mathcal{U}$  with \emph{rotation number} $\rho \neq \frac{1}{3} $   is a function $ \psi: P \to \mathbb{R}$    defined as follows: 
 	
 	\begin{enumerate}
 		\item choose any point $x_0 \in P$ and set $ \psi(x_0) =0$. 
 		
 		\item define $\psi$ iteratively on other points of $P$  as follows:  
 		
 		\raggedright for each $ k \in \mathbb{N} $ set $ \psi \left ( f^k(x_0) \right ) = \psi(x_0) + k \rho - [t_k]$ where  $ t_k = \displaystyle \sum_{j=0}^{k-1} d \left (f^j(x_0), f^{j+1}(x_0) \right )$ and for $x \in \mathbb{R}$, $[x]$ denotes the greatest integer less than or equal to $x$.

 	\end{enumerate}
 	
 \end{definition}

 	For any $ x \in P$, the value $\psi(x)$ is called the \emph{code} of the point $x$.  If $P$ is a cycle of period $q$,  then $t_q = q \rho \in  \mathbb Z_+$ and hence $ \psi(f^q(x_0)) = \psi(x_0)$. So the definition is consistent.  It is easy to see that  $ \psi(x)- \psi(y)$ is independent of the choice of the point $x_0$ for $x,y \in P$. 
 	
 	\begin{definition}[\cite{BB5}]\label{non:decreasing}
 		
 		The \emph{code function} $\psi$  of a cycle $P$ with \emph{rotation number} $\rho \neq \frac{1}{3}$  is said to be \emph{non-decreasing} if the following holds:
 		
 		\begin{enumerate}
 			\item if $ \rho < \frac{1}{3}$, then  $\psi(x) \leqslant \psi(y )$ whenever $ x> y$, and 
 			
 			\item if $ \rho > \frac{1}{3}$, then $\psi(x) \geqslant \psi(y )$ whenever $ x> y$.
 		\end{enumerate}

Otherwise, the \emph{code function} $\psi$ is referred to as \emph{decreasing}. Additionally, \emph{code function} $\psi$  is called  \emph{strictly increasing} if $\psi$ is \emph{non-decreasing} and the \emph{codes} of no two consecutive points of $P$ lying in the same \emph{branch} of $T$ are the same.
 		
 	\end{definition}
 	
 		 A \emph{pattern} $A$ is said to possess a \emph{non-decreasing} or \emph{strictly increasing} \emph{code function} depending on whether any cycle $P$,  \emph{exhibiting} it has a \emph{non-decreasing} or \emph{strictly increasing code function}, respectively.
 	  The following result from \cite{BB5} provides a necessary and sufficient condition for a \emph{pattern} $A$ to qualify as a \emph{triod-twist pattern}.

 \begin{theorem}[\cite{BB5}]\label{tri-od:rot:twist:order:inv}
 	The necessary and sufficient condition for a regular pattern $\pi$  to be a  triod twist pattern  is that it has a strictly increasing code function. 
 \end{theorem}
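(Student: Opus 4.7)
The plan is to prove both directions by working with the $P$-linear map $f$ associated with a cycle $P$ exhibiting $\pi$ (Theorem \ref{forcing}). Every pattern forced by $\pi$ arises as the pattern of some cycle of $f$, and by Theorem \ref{theorem:interval:graph} every such cycle is the associated point of an admissible loop of intervals inside $[P]$. So both directions reduce to controlling which admissible loops of $P$-basic intervals can produce cycles with rotation number $\rho_{\pi}$, and comparing the resulting cyclic orderings to that of $\pi$ itself.

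For the sufficiency direction (strictly increasing \emph{code function} $\Rightarrow$ \emph{triod twist}), I would let $Q$ be any cycle of $f$ with $\rho_Q = \rho_{\pi}$ and show that $Q$ exhibits the same \emph{pattern} as $P$. The idea is to use $\psi$ as an order-preserving identification between $P$ and the orbit of the rigid rotation by $\rho_{\pi}$ on $\uc$. Strict monotonicity ensures this identification is compatible with the spatial ordering on each \emph{branch} of $T$. The $f$-\emph{loop} of \emph{intervals} associated with $Q$ must then traverse the $P$-\emph{basic intervals} in the same cyclic order as the \emph{fundamental admissible loop} of $P$; otherwise the sum of \emph{displacements} along the loop would fail to equal $q \rho_{\pi}$. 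Hence $Q$ has the same \emph{pattern} as $P$, so $\pi$ is a \emph{triod twist}.

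For the necessity direction (\emph{triod twist} $\Rightarrow$ strictly increasing \emph{code function}), I would argue by contrapositive. Suppose either $\psi$ fails to be monotone in the direction prescribed by the sign of $\rho_{\pi} - \tfrac{1}{3}$, or $\psi$ assigns equal \emph{codes} to two consecutive points $x > y$ of $P$ on the same \emph{branch}. In the first case, a code \emph{inversion} between two points on the same \emph{branch} allows me to splice two arcs of the \emph{fundamental admissible loop} of $P$ into a new admissible loop whose total \emph{displacement} still equals the length of the loop times $\rho_{\pi}$, but whose cyclic ordering of \emph{branches} differs from that of $\pi$. In the second case, the two equal-coded points can be interchanged via an admissible shortcut to build a distinct rotation-number-preserving loop. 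In either case, Theorem \ref{theorem:interval:graph} produces a cycle of $f$ with rotation number $\rho_{\pi}$ but exhibiting a \emph{pattern} different from $\pi$, contradicting the \emph{triod twist} hypothesis. Throughout, the green property of \emph{triod twists} (Theorem \ref{necesary:condition:triod:twist}) and the absence of the ``wrong'' color (Theorem \ref{bifurcation:one:third}) will be used to guarantee that the spliced loop is actually admissible and has the claimed \emph{rotation number}.

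The main obstacle I expect is in the necessity direction: converting each qualitative failure of strict monotonicity of $\psi$ into an explicit \emph{admissible loop} of \emph{intervals} with preserved \emph{rotation number} but altered cyclic order. The bookkeeping is delicate because the \emph{displacement} function $d$, the color restrictions coming from $\rho_{\pi} \lessgtr \tfrac{1}{3}$, and the convex-hull structure of $[P]$ interact in a way that must be verified case by case, especially for \emph{rotation numbers} near $\tfrac{1}{3}$ where Theorem \ref{rot:one:third} also plays a role. The sufficiency direction is conceptually cleaner but will require verifying rigorously that strict monotonicity genuinely rules out every alternative cyclic ordering of $[P]$-\emph{basic intervals} compatible with displacement total $q \rho_{\pi}$.
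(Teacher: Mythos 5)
First, note that the paper you were asked about does not prove this statement at all: it is quoted verbatim from \cite{BB5} as background, so there is no in-paper proof to compare with and your plan has to stand on its own. Judged that way, both directions have a genuine gap at exactly the step where the work lies. In the necessity direction, your splicing claim fails in the decreasing-code case: if $x=f^{n}(y)>y$ with $\psi(x)>\psi(y)$ and $\rho_{\pi}<\frac{1}{3}$, the admissible shortcut loop $[y,a]\to[f(y),a]\to\dots\to[f^{n-1}(y),a]\to[y,a]$ has rotation number $\frac{m}{n}$ strictly \emph{smaller} than $\rho_{\pi}$ (this is precisely the computation in the proof of Theorem \ref{decreasing:conclusion}), and concatenating it with copies of the fundamental loop only produces rotation numbers strictly between $\frac{m}{n}$ and $\rho_{\pi}$; no loop obtained this way has ``total displacement equal to the length of the loop times $\rho_{\pi}$'' together with a different cyclic order. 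To finish one needs an additional mechanism, e.g.\ showing that $\rho_{\pi}$ lies in the \emph{interior} of the forced rotation interval (using that $\frac{1}{3}$ always belongs to it via the admissible loop $[p_0,a]\to[p_1,a]\to[p_2,a]\to[p_0,a]$ furnished by the canonical ordering of Theorem \ref{canonical:numberinh}) and then invoking the structure of the forced set of modified rotation pairs (Theorem \ref{result:1}) to extract a forced pattern with the same rotation number but multiplier $m\geqslant 2$. Your equal-codes subcase is sound in spirit: there the shortcut loop does preserve the rotation number exactly and yields a forced cycle of smaller period, hence a different pattern (compare Theorem \ref{equal:code:non:coprime}).

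In the sufficiency direction, the central assertion --- that a cycle $Q$ of the $P$-linear map with $\rho_Q=\rho_{\pi}$ must traverse the $P$-basic intervals in the same cyclic order as the fundamental loop, ``otherwise the sum of displacements along the loop would fail to equal $q\rho_{\pi}$'' --- is unjustified and false as a dichotomy. Cycles with rotation pair $(kp,kq)$, $k\geqslant 2$, have the same rotation number and their loops are in no way tied to the fundamental cyclic order by a displacement count; and even among loops of length $q$, equal total displacement only constrains the winding, not which basic intervals are visited or in what order, so nothing yet forces $Q$ to exhibit $\pi$. Strict monotonicity of $\psi$ has to enter quantitatively, for instance through the projection to the nearest $P$-point and the insert/weighted-average argument used in the proof of Theorem \ref{non:decreasing:conclusion}, upgraded to strict inequalities: with a strictly increasing code every insert has rotation number strictly less than $\rho_{\pi}$, which forces the loop associated with $Q$ to be the fundamental loop or a repetition of it, and one must then still argue separately that this pins down the pattern of $Q$ as $\pi$. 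As written, your text asserts the conclusions of these steps rather than proving them, so the proposal is a program rather than a proof.
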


 \section{Main Section}

We begin by exploring the connection between position of the \emph{rotation number} $\rho(\pi)$ of a \emph{pattern} $\pi$ within the \emph{rotation interval} $I_{\pi}$ \emph{forced} by the \emph{pattern} $\pi$ and the nature of the \emph{code function} \emph{associated} with the \emph{pattern} $\pi$. 

\begin{definition}
A \emph{pattern} $\pi$ is said an \emph{interior pattern} if its \emph{rotation number} $\rho(\pi)$ is an interior point of the \emph{rotation interval} $I_{\pi}$ \emph{forced} by the \emph{pattern} $\pi$. It is called a \emph{frontier pattern} if its \emph{rotation number} $\rho(\pi)$ is an end point of the \emph{rotation interval} $I_{\pi}$ \emph{forced} by the \emph{pattern} $\pi$. 
\end{definition}

 \begin{theorem}\label{decreasing:conclusion}
 	A pattern $\pi$ on a triod $T$ with a  decreasing code is an interior pattern. 
 	\end{theorem}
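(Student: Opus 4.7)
My plan is to prove the contrapositive: assume $\pi$ is a \emph{frontier pattern}, so its \emph{rotation number} $\rho$ is an endpoint of $I_\pi = L(G_P)$, and show the \emph{code function} $\psi$ of any cycle $P$ exhibiting $\pi$ is \emph{non-decreasing}. The very existence of a code already forces $\rho \neq 1/3$; without loss of generality take $\rho < 1/3$ (the case $\rho > 1/3$ is symmetric, by reversing the orientation on each branch). The key tool is a monotonicity property of the oriented graph: if $u > v$ lie on a common branch of $T$, then every outgoing arrow of $v$ in $G_P$ is also an outgoing arrow of $u$, since any witness $z \leqslant v$ satisfies $z \leqslant u$. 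In particular, $u \to f(v)$ is always an arrow in $G_P$.

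Now suppose for contradiction that $\psi$ has an inversion: $x > y$ on the same branch with $\psi(x) > \psi(y)$, chosen minimal in the sense that no point of $P$ lies strictly between $x$ and $y$ on that branch. Writing $y = f^{n_y}(x_0)$ and $x = f^{n_x}(x_0)$ with $n_y < n_x$ (after cyclic relabeling) and setting $|A| := n_x - n_y$, the shortcut arrow $x \to f(y)$ closes the cycle segment $f(y) \to f^2(y) \to \cdots \to f^{|A|-1}(y) \to f^{|A|}(y) = x$ into a loop $L_1$ in $G_P$ of length $|A|$. Since $x$ and $y$ share a branch, $d(x \to f(y)) = d(y \to f(y))$, so the displacement of $L_1$ equals $D_A := t_{n_x} - t_{n_y}$, giving $\rho(L_1) = D_A/|A|$. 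The identity
\[
\psi(x) - \psi(y) = |A|\rho - \bigl([t_{n_x}] - [t_{n_y}]\bigr),
\]
together with the minimality of the chosen inversion (which is what lets me pin down the sign of the fractional correction $\{t_{n_x}\} - \{t_{n_y}\}$), translates $\psi(x) > \psi(y)$ into the strict inequality $\rho(L_1) < \rho$. This rules out $\rho$ being the infimum of $I_\pi$ and therefore forces $\rho$ to be its supremum.

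To rule out $\rho = \sup I_\pi$ as well, I would construct a companion loop $L_2$ in $G_P$ with $\rho(L_2) > \rho$. Since $\rho = (D_A + D_B)/(|A|+|B|)$ is a weighted average of $\rho(L_1) = D_A/|A|$ and $D_B/|B|$, where $B$ is the complementary cycle segment from $x$ back to $y$, the inequality $D_A/|A| < \rho$ automatically gives $D_B/|B| > \rho$; the task reduces to promoting $B$ into an actual loop in $G_P$ via a second shortcut. I anticipate this shortcut arising either from another inversion of $\psi$ that the cyclic sum-to-$\rho$ constraint forces to sit inside $B$, or from a complementary structural arrow generated at the return of $B$ to the branch of $x$. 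The existence of $L_2$ then contradicts $\rho$ being the supremum and completes the contrapositive. The main obstacle is twofold: rigorously extracting the \emph{strict} inequality $\rho(L_1) < \rho$ from the code inversion, which requires pinning down the sign of the fractional correction (expected to be handled by the minimal choice of inversion), and the asymmetry of the shortcut construction---the arrow $x \to f(y)$ is one-directional and does not automatically yield $y \to f(x)$---so the companion loop $L_2$ must be extracted from a separate structural feature of the cycle, whose rigorous identification is the heart of the argument.
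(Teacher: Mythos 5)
Your first half is sound and essentially parallels the paper's own argument: the paper closes the orbit segment from $y$ to $x=f^{|A|}(y)$ at the level of admissible intervals $[f^i(y),a]$ and uses Theorem \ref{theorem:interval:graph} to obtain a forced cycle of rotation number strictly less than $\rho$, while you stay inside $G_P$; either is fine, since $I_\pi=L(G_P)$ by definition. Moreover, the ``fractional correction'' you worry about is a non-issue and needs no minimal choice of inversion: because $x$ and $y$ lie on the same branch, $t_{n_x}-t_{n_y}$ is already an integer, hence $[t_{n_x}]-[t_{n_y}]=t_{n_x}-t_{n_y}=D_A$ and $\psi(x)-\psi(y)=|A|\rho-D_A>0$ gives $\rho(L_1)=D_A/|A|<\rho$ outright.

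The genuine gap is the second half, which you yourself flag as ``the heart of the argument'' and leave unproved: you must also exhibit an element of $I_\pi$ strictly greater than $\rho$, and your proposed mechanism does not deliver it. Closing the complementary segment $B:\ x\to f(x)\to\cdots\to f^{q-|A|}(x)=y$ into a loop of $G_P$ requires the arrow $y\to f(x)$, i.e.\ some $z\leqslant y$ with $f(z)\geqslant f(x)$; the monotonicity property you quote produces shortcut arrows only from the \emph{larger} point of a pair, and here $y<x$, so nothing forces this arrow to exist. Nor is there any reason a second code inversion must sit inside $B$ (a decreasing code may have exactly one inversion), and the weighted-average identity only says $D_B/|B|>\rho$ --- it does not make $B$ a loop. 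The missing ingredient comes from elsewhere: for a regular cycle the rotation set $L(G_P)$ always contains $\frac13$, since by Theorem \ref{canonical:numberinh} the points $p_0,p_1,p_2$ of $P$ closest to $a$ are all black in the canonical ordering, so $p_0\to p_1\to p_2\to p_0$ is a loop of rotation number $\frac13$; as $\rho<\frac13$ in your case (the case $\rho>\frac13$ is symmetric), this loop together with your $L_1$ places $\rho$ in the interior. This is in effect what the paper does by invoking Theorem \ref{result:1} after forcing a pattern of rotation number less than $\rho$. Until $L_2$ (or this $\frac13$-loop substitute) is in place, your argument only rules out $\rho=\inf I_\pi$, which is weaker than the statement.
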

 
 \begin{proof}
 	We consider	 the case $\rho(\pi) = \rho \leqslant \frac{1}{3}$, the case when $\rho(\pi) > \frac{1}{3}$ is similar. Let $P$ be a periodic orbit of period $q$ which \emph{exhibits} $\pi$ and let $f$ be a $P$-\emph{linear} map. Since, $\pi$ has a \emph{decreasing code}, there exists $x,y \in P$ such that  $x> y$ and $\psi(x) > \psi(y)$. Let $x= f^n(y)$ for $n \in \mathbb{N}$, $n<q$. Consider the finite sequence of intervals $I_i$, $i \in \{0,1,2, \dots n\}$ such that $I_0 = [y,a], I_1 = [f(y), a], \dots, I_n = [f^n(y), a] = [x,a] \supseteq [y,a]$. Clearly, $I_i \supseteq I_{i+1}$ for all $i \in \{ 0,1, 2, \dots n-1\}$ and hence $I_0 \to I_1 \to I_2 \to \dots I_{n-1} \to I_n \to I_0$ is an \emph{admissible loop} of intervals. So, by Theorem \ref{theorem:interval:graph}, there exists a periodic point $z \in I_0$ such that $f^i(z) \in I_i$ for $i \in \{0,1, \dots n-1\}$ and $f^n(z) = z$. The period of $z$ is either $n$ itself or a divisor of $n$; the \emph{rotation number} of $z$ is $\frac{m}{n}$ for some $m \in \mathbb{N}$. The \emph{code} for the point $x$ is $\psi(x) = \psi(f^n(y)) = \psi(y) + n \rho- m$. Now, from $\psi(x) > \psi(y)$, we get, $\psi(y) + n\rho - m > \psi(y)$ which yields $\frac{m}{n} < \rho \leqslant \frac{1}{3}$. Thus, $f$ has a periodic orbit $Q$ with \emph{rotation number} $\frac{m}{n} < \rho$ and hence by Theorem \ref{forcing}, $\pi$ \emph{forces} a \emph{pattern} $\pi'$ with \emph{rotation number} $\rho(\pi') < \rho(\pi) \leqslant \frac{1}{3}$. The result now follows from Theorem \ref{result:1}. 
 \end{proof}

 \begin{theorem}\label{equal:code:non:coprime}
If for a pattern $\pi$ on a triod $T$, the codes of two points of $T$ lying in the same branch are equal, then $\pi$ has a non-coprime rotation pair. 
\end{theorem}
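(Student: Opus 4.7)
The plan is to combine the defining recursion of the code function with an elementary mod-$3$ observation: along any orbit segment whose two endpoints lie on the same branch of $T$, the cumulative displacement must be an integer. Once that is in place, the hypothesis $\psi(x)=\psi(y)$ collapses into an arithmetic identity that forces the rotation pair to be non-coprime.

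More concretely, I will fix a cycle $P$ of period $q$ that exhibits $\pi$, denote its rotation pair by $(p,q)$ with $\rho = \rho(\pi) = p/q$, and take a $P$-linear map $f \in \mathcal U$. By hypothesis there exist distinct points $x, y \in P$ on a common branch of $T$ with $\psi(x) = \psi(y)$; I will write $x = f^n(y)$ for some $n \in \{1, \dots, q-1\}$ and put $t_n = \sum_{j=0}^{n-1} d\bigl(f^j(y), f^{j+1}(y)\bigr)$, so that the code recursion reads $\psi(x) = \psi(y) + n\rho - [t_n]$.

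Next I will verify that $t_n \in \mathbb Z$. Each one-step displacement equals $k_j/3$ for some $k_j \in \{0,1,2\}$, and by the definition of $d$ a displacement of $k_j/3$ shifts the branch index by $k_j$ modulo $3$. Iterating, the branch index of $x = f^n(y)$ differs from that of $y$ by $\sum_{j=0}^{n-1} k_j = 3t_n$ modulo $3$. Since $x$ and $y$ lie on the same branch, $3 t_n \equiv 0 \pmod 3$, which forces $t_n \in \mathbb Z$ and hence $[t_n] = t_n$.

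Plugging $[t_n] = t_n$ into the code recursion together with $\psi(x) = \psi(y)$ yields $n\rho = t_n \in \mathbb Z$. Setting $m := n\rho$, we obtain $p/q = m/n$ with $1 \le n < q$. If $\gcd(p,q) = 1$ held, then $q \mid n$, contradicting $n < q$; hence $\gcd(p,q) \ge 2$ and the rotation pair $(p,q)$ of $\pi$ is non-coprime. The only substantive geometric step is the branch-integrality claim $t_n \in \mathbb Z$; the remainder is a short manipulation of the code-function identity.
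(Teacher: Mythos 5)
Your proof is correct, but it takes a genuinely more elementary route than the paper. The paper's own argument builds the admissible loop of intervals $[v,a]\to[f(v),a]\to\dots\to[f^{s-1}(v),a]\to[v,a]$ and invokes Theorem \ref{theorem:interval:graph} to produce an auxiliary periodic point $w$ of period dividing $s<q$ whose rotation number $\frac{r}{s}$ is then forced, via the code identity $\psi(u)=\psi(v)+s\frac{p}{q}-r$, to equal $\frac{p}{q}$, which is impossible when $\gcd(p,q)=1$. You dispense with the auxiliary orbit entirely: you observe that each one-step displacement $k_j/3$ advances the branch index by $k_j$ modulo $3$, so when the two endpoints of the orbit segment lie on the same branch the cumulative displacement $t_n$ is an integer, and then $\psi(x)=\psi(y)$ together with the code recursion gives $n\rho=t_n\in\mathbb{Z}$ directly, contradicting coprimality since $1\le n<q$. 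In effect you prove, by a mod-$3$ count, the integrality fact that the paper extracts implicitly from the rotation number of the interval loop (the paper's $r$ is exactly your $t_s$). Your version is shorter and avoids the covering-interval machinery; the paper's version has the side benefit of actually exhibiting a forced cycle with the same rotation number and smaller period, which fits the forcing framework used elsewhere in the paper. Both arguments rest equally on the paper's stated fact that code differences are independent of the base point (you implicitly re-base at $y$, just as the paper re-bases at $v$), so no additional gap is introduced.
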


\begin{proof}
	 Let $P$ be a periodic orbit of period $q$, \emph{rotation number} $\frac{p}{q}$ which \emph{exhibits} $\pi$ and let $f$ be a $P$-\emph{linear} map. It is sufficient to consider the case, $\frac{p}{q} \leqslant \frac{1}{3}$, the arguments in the other case are analogous.  Suppose, there exists $u,v \in P$ such that  $u> v$ and $\psi(u) = \psi(v)$. Now, there exists $s \in \mathbb{N}$, $s<q$ such that $u= f^s(v)$. Consider the sequence of intervals $I_i$, $i \in \{0,1,2, \dots s\}$ such that $I_0 = [v,a], I_1 = [f(v), a], \dots I_s = [f^s(v), a] = [u,a] \supseteq [v,a]$. Clearly, $I_i \supseteq I_{i+1}$ for all $i \in \{ 0,1, 2, \dots s-1\}$ and hence $I_0 \to I_1 \to I_2 \to \dots I_{s-1} \to I_s \to I_0$ is an \emph{admissible loop} of intervals. So, by Theorem \ref{theorem:interval:graph}, there exists a periodic point $w \in I_0$ such that $f^i(w) \in I_i$ for $i \in \{0,1, \dots s-1\}$ and $f^s(w) = w$. The period of $w$ is either $s$ itself or a divisor of $s$; the \emph{rotation number} of $w$ is $\frac{r}{s}$ for some $r \in \mathbb{N}$. The \emph{code} for the point $u$ is $\psi(u) = \psi(f^s(v)) = \psi(v) + s \frac{p}{q}- r$. Now, from $\psi(u) = \psi(v)$, we get, $\psi(v) + s\frac{p}{q} - r = \psi(v)$ which yields $\frac{r}{s} = \frac{p}{q}$. But, $s< q$, this means $p$ and $q$ cannot be co-prime. 
\end{proof}

\begin{theorem}\label{non:twist:non:coprime}
	If a pattern $\pi$ on a triod $T$ with coprime rotation pair is not a triod twist pattern, then it is an interior pattern. 

\end{theorem}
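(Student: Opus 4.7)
The plan is to chain together the three previous results (Theorems \ref{tri-od:rot:twist:order:inv}, \ref{decreasing:conclusion}, and \ref{equal:code:non:coprime}) in a direct way. The hypothesis that $\pi$ is not a triod twist pattern, combined with the characterization Theorem \ref{tri-od:rot:twist:order:inv}, tells us that the code function $\psi$ of $\pi$ fails to be strictly increasing. Unpacking the definition of \emph{strictly increasing} given just before that theorem, this failure must occur in exactly one of two ways: either $\psi$ is not \emph{non-decreasing} (so $\psi$ is \emph{decreasing} in the sense of Definition \ref{non:decreasing}), or $\psi$ is non-decreasing but there exist two consecutive points of the cycle on the same branch whose codes coincide.

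The strategy is then to rule out the second alternative using the coprimality assumption. In the second case, there are two points of $P$ on the same branch with equal codes, so Theorem \ref{equal:code:non:coprime} immediately gives that the rotation pair of $\pi$ is non-coprime, contradicting the hypothesis. Hence the code function of $\pi$ must in fact be decreasing, and one last invocation of Theorem \ref{decreasing:conclusion} concludes that $\pi$ is an interior pattern.

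Since every step reduces to quoting one of the already-established theorems, there is essentially no computational work and no genuine obstacle; the only thing to be careful about is correctly parsing the negation of the phrase ``strictly increasing code function'' in Definition \ref{non:decreasing}, so as to make sure the two sub-cases (decreasing, versus non-decreasing with two equal consecutive codes on the same branch) exhaust all possibilities. Once that logical bookkeeping is in place, the coprimality hypothesis is exactly what is needed to eliminate the second sub-case, and Theorem \ref{decreasing:conclusion} handles the first.
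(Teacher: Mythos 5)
Your proposal is correct and follows essentially the same route as the paper: negate "strictly increasing" via Theorem \ref{tri-od:rot:twist:order:inv}, use coprimality together with Theorem \ref{equal:code:non:coprime} to exclude equal codes on a branch, conclude the code is decreasing, and finish with Theorem \ref{decreasing:conclusion}. Your explicit parsing of the two sub-cases is just a more careful spelling-out of the step the paper leaves implicit.
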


\begin{proof}
	Since, $\pi$ is not a \emph{triod-twist pattern}, by Theorem \ref{tri-od:rot:twist:order:inv}, the \emph{code} for $\pi$ cannot be \emph{strictly increasing}. Also, since the \emph{rotation pair} of $\pi$ is co-prime, it follows from Theorem \ref{equal:code:non:coprime}, that the \emph{code} for $\pi$ is \emph{decreasing}. Now, the result follows from Theorem \ref{decreasing:conclusion}. 

\end{proof}	

Theorem \ref{non:twist:non:coprime} gives us a necessary and sufficient condition for a \emph{pattern} $\pi$ on a \emph{triod} $T$ to be a \emph{triod-twist pattern} based upon its \emph{forced rotation interval}. 

\begin{theorem}
	The necessary and sufficient condition for a pattern $\pi$ on a triod $T$ to be a triod twist pattern is that it is a frontier pattern with coprime rotation pair. 
\end{theorem}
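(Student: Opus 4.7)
The plan is to prove the two implications separately: sufficiency comes essentially for free from Theorem \ref{non:twist:non:coprime}, while necessity requires a short argument exploiting the convex-hull structure of $mrp(\pi)$ supplied by Theorem \ref{result:1}, together with the extremal role of $1$ in the Sharkovsky ordering.

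For sufficiency, I would suppose $\pi$ is a frontier pattern with coprime rotation pair. If $\pi$ were not a triod twist pattern, then Theorem \ref{non:twist:non:coprime} (applicable precisely because the rotation pair is coprime) would force $\pi$ to be an interior pattern, contradicting the frontier hypothesis. Hence $\pi$ is a triod twist pattern, and this direction needs no further work.

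For necessity, I would suppose $\pi$ is a triod twist pattern with modified rotation pair $(p/q, k)$, $\gcd(p,q) = 1$. Since a triod twist is regular by definition, Theorem \ref{result:1} gives $mrp(\pi) = [(t_1, m_1), (t_2, m_2)]$. I first show $\pi$ must be frontier: if not, $p/q$ would lie strictly between $t_1$ and $t_2$, and by the convex-hull description every $(p/q, m)$ with $m \in \mathbb{N}$ would be realized as the modified rotation pair of some pattern forced by $\pi$; for any $m \neq k$ such a pattern has period $mq \neq kq$, hence differs from $\pi$ while sharing its rotation number, contradicting the triod-twist property. So $\pi$ must be frontier, say $\rho(\pi) = p/q = t_i$. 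Applying the convex-hull description at this endpoint, $\pi$ forces patterns with mrp $(p/q, m)$ for every $m \in Sh(m_i)$; since $1$ sits at the bottom of the Sharkovsky ordering, $1 \in Sh(m_i)$, so $\pi$ forces a pattern of period $q$ with mrp $(p/q, 1)$. If $k > 1$, this pattern differs from $\pi$ in period while sharing its rotation number, again contradicting the triod-twist property, and we conclude $k = 1$, i.e.\ the rotation pair is coprime. The only subtle point, which I expect to be routine, is to confirm that elements of the convex hull furnished by Theorem \ref{result:1} correspond to genuinely forced patterns rather than merely formal points of $\mathbb{M}$; this is precisely what the equality $mrp(\pi) = [(t_1, m_1), (t_2, m_2)]$ asserts, after which distinctness of the produced forced pattern from $\pi$ reduces to the observation that period is a pattern invariant.
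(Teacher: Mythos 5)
Your proposal is correct and follows essentially the same route as the paper: sufficiency by contradiction via Theorem \ref{non:twist:non:coprime}, and necessity from the definition of a \emph{triod twist pattern} together with the convex-hull description of $mrp(\pi)$ in Theorem \ref{result:1}. Your necessity argument simply spells out in detail (including the role of $1$ in the Sharkovsky ordering) what the paper dismisses as trivial, and the details check out.
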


\begin{proof}
	The necessary part is trivial and follows directly from definition of \emph{triod-twist pattern} and Theorem \ref{result:1}. To prove the sufficient part, let $\pi$ be a \emph{frontier pattern} with \emph{coprime rotation pair}. If $\pi$ is not a \emph{triod twist pattern}, then by Theorem \ref{non:twist:non:coprime}, $\pi$ is an \emph{interior pattern}, a contradiction and hence the result follows. 
\end{proof}

Thus, \emph{triod-twist patterns} are the ``\emph{canonical frontier patterns}". What other \emph{frontier patterns} does a continuous map on a \emph{triod} posses? To answer this question, we introduce the notion of \emph{block structure} for \emph{cycles} of \emph{triods} similar to one introduced for interval maps by Blokh and Misiurewicz (see \cite{alm00}). 

\begin{definition}
	Let $P$ be a cycle on a \emph{triod} $T$ and let $f$ be a $P$-\emph{linear} map. We say that $P$ has a \emph{block structure} over a cycle $Q$ if $P$ can be divided into subsets $P_1, P_2, \dots P_m$,  called \emph{blocks} of the same cardinality, where $m$ is the period of $Q$, the sets $[P_i]$ are pairwise disjoint, none of them contains the \emph{branching point} $a$ of $T$, each of them contains one point $x_i$ of $Q$ and $f(P_i) = P_j$ whenever $f(x_i) = x_j$. 
\end{definition}

We use the same terminology for \emph{patterns}. In particular, a \emph{pattern} $A$ has a \emph{block structure} over a \emph{pattern} $B$ if there exists a cycle $P$ of \emph{pattern} $A$ with a \emph{block structure} over a cycle $Q$ of \emph{pattern} $B$. We now show that if a \emph{pattern} $A$ on a \emph{triod} $T$ has a \emph{block structure} over a \emph{triod-twist pattern} $B$, then $A$ is a \emph{frontier pattern}. For this we first prove the following theorem.

\begin{theorem}\label{non:decreasing:conclusion}
A pattern $\pi$ on a triod $T$ with a non-decreasing code is a frontier pattern. 
\end{theorem}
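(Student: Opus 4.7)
My plan is to argue by contradiction, extending the admissible-loop construction from the proof of Theorem \ref{decreasing:conclusion}. Assume without loss of generality that $\rho(\pi) = \rho < \tfrac{1}{3}$; the case $\rho > \tfrac{1}{3}$ is symmetric, with the inequalities on the code reversed. Let $P$ be a cycle exhibiting $\pi$ and let $f$ be a $P$-\emph{linear} map.

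The first step is a code-based lower bound on a natural family of \emph{admissible loops}. For every $y \in P$ and every $n \in \mathbb{N}$ with $f^n(y) \geq y$ on the same \emph{branch} as $y$, the sequence
\[
[y,a] \to [f(y),a] \to \dots \to [f^{n-1}(y),a] \to [y,a]
\]
is an \emph{admissible loop} of \emph{intervals} (by the same reasoning as in the proof of Theorem \ref{decreasing:conclusion}), and the periodic point supplied by Theorem \ref{theorem:interval:graph} has \emph{rotation number}
\[
\rho - \frac{\psi(f^n(y)) - \psi(y)}{n}.
\]
Since the non-decreasing code applied to $f^n(y) \geq y$ on the same \emph{branch} forces $\psi(f^n(y)) \leq \psi(y)$, this \emph{rotation} is at least $\rho$.

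Now suppose, for contradiction, that $\pi$ is an \emph{interior pattern}. By Theorem \ref{result:1}, $\pi$ \emph{forces} a \emph{pattern} $\pi'$ with $\rho(\pi') < \rho$, so by Theorem \ref{forcing}, $f$ carries a cycle $Q$ with $\rho(Q) < \rho$. The plan is to extract from $Q$ an \emph{admissible} return loop of the form above with \emph{rotation} equal to $\rho(Q) < \rho$, contradicting the lower bound. Concretely, choose $q \in Q$, let $y \in P$ be the outer endpoint (on the \emph{branch} of $q$) of the $P$-\emph{basic interval} containing $q$, and inflate the loop of $P$-\emph{basic intervals} traversed by $q$'s orbit to \emph{admissible intervals} $[y_i,a]$ whose outer endpoints $y_i \in P$ bound the corresponding basic intervals. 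Analyzing the first return of $q$ to $[y,a]$ then produces an \emph{admissible loop} whose \emph{rotation} matches $\rho(Q)$.

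The main obstacle is precisely this inflation step: verifying that the resulting \emph{admissible loop} is of the prescribed return form $y_{i+1} = f(y_i)$, and that its \emph{rotation} equals $\rho(Q)$ rather than some weaker surrogate. In general, the inflation only guarantees $f(y_i) \geq y_{i+1}$ on the same \emph{branch}, so one must either refine the inflation to achieve equality, decompose the loop into maximal return sub-loops, or extend the first step to \emph{admissible loops} with the weaker closing condition $f(y_i) \geq y_{i+1}$ by re-running the code-tracking computation in that generality, checking that branch displacements are preserved throughout. This is where the main technical work lies.
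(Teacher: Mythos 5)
Your skeleton matches the paper's (contradiction; take a cycle $Q$ of the $P$-linear map with $\rho(Q)<\rho$; project its itinerary onto outer endpoints of $P$-basic intervals to get an admissible loop built from points of $P$; use the non-decreasing code to rule this out), but the proposal stops exactly where the real work happens, and the fix you gesture at is not the one that works. First, even the weak covering relation $f(y_i)\geqslant y_{i+1}$ \emph{with matching branch displacements} is not automatic: the paper first normalizes $Q$, replacing it (via the auxiliary admissible loop through the points $y_i$ closest to $a$ with the same images) by an ``innermost'' cycle with the same rotation number, and only then can it verify $f(h(t))\geqslant h(f(t))$ for the outer-endpoint projection $h$; without this step $f(y_i)$ may land in a different branch than $y_{i+1}$ and the inflated loop need not have rotation number $\rho(Q)$. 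Second, and more importantly, the paper never tries to upgrade the inflated loop to your ``return form'' $y_{i+1}=f(y_i)$ (which is false in general), nor does it prove a lower bound for loops with the weak closing condition. Instead, whenever $f(y_i)>y_{i+1}$ it \emph{inserts} the genuine orbit segment $f(y_i)\to f^2(y_i)\to\dots\to y_{i+1}$ (possible since $P$ is a single cycle), so that the completed loop $\gamma$ is the fundamental loop of $P$ (or a repetition) and hence has rotation number exactly $\rho$. The non-decreasing code, applied to the same-branch pair $f(y_i)\geqslant y_{i+1}$, shows each insert has rotation number at most $\rho$, and then the contradiction is a weighted-average (mediant) identity: $\rho=\rho(\gamma)$ is an average of $\rho(\alpha)=p/q<\rho$ and insert rotation numbers $\leqslant\rho$, which is impossible.

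Note also that your third suggested repair (``re-run the code-tracking computation'' for loops closing only up to $f(y_i)\geqslant y_{i+1}$) does not go through directly: the per-step increment of $\psi$ is $\rho$ minus the jump of $[t_k]$, i.e.\ it records integer crossings of the \emph{cumulative} displacement, not the raw displacement $d(y_i\to f(y_i))$ that enters the loop's rotation number. The clean identity between code differences and displacements holds only over orbit segments whose two endpoints lie in the same branch (where the accumulated displacement is an integer) --- which is precisely why the paper's insert decomposition, rather than a termwise estimate on an arbitrary weakly-closed loop, is the mechanism that makes the non-decreasing hypothesis bite. As written, your argument identifies the obstacle but does not overcome it, so the proof is incomplete at its central step.
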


\begin{proof}
	We first consider the case $\rho(\pi) = \rho \leqslant \frac{1}{3}$; the case when $\rho(\pi) > \frac{1}{3}$ is similar. By way of contradiction, suppose $\rho$ is not an endpoint of the \emph{rotation interval} $I_{\pi}$ \emph{forced} by the \emph{pattern} $\pi$.  Let $I_{\pi}$ be the \emph{rotation interval forced} by the \emph{pattern} $\pi$.  Let $P$ be a \emph{periodic orbit} which \emph{exhibits} $\pi$ and let $f$ be a $P$-\emph{linear map}. Since, $\rho $ is not an end point of $I_{\pi}$, so $f$ has a periodic orbit $Q$ has \emph{rotation number}, $\frac{p}{q} < \rho \leqslant \frac{1}{3}$ where g.c.d$(p,q) =1$.

	Let $x \in Q$. For each $i \in \{ 0,1,2, \dots, q-1\}$, choose the point $y_i$ closest to the \emph{branching point} $a$ in the same \emph{branch} as $f^i(x)$ such that  $f(y_i) = f(f^i(x))$. Then, it is easy to see that $[y_0,a] \to [y_1,a] \to \dots [y_{q-1}, a] \to [y_0,a]$ is an \emph{admissible loop} of \emph{intervals}, so there exists a periodic point $ z\in T$ of period $q$ such that $y_i \geqslant f^i(z)$ for all $i \in \{0,1,2, \dots q-1\}$.  Clearly, the \emph{rotation numbers} of the cycles containing the points $x$ and $z$ are the same.  So, we can assume without loss of generality that the originally chosen point $x$ is $z$.

	Observe that each $t \in Q       $ belongs to some $P$-\emph{basic interval} $[u,v]$ where $u>v$. We define a function $h: Q \to P$ as follows: Define $h(t) =u$. We claim $f(h(t)) > h(f(t))$. Since, $f$ is $P$-\emph{linear} one of $f(u)$ and $f(v)$ must lie in the same \emph{branch} of $T$ as $f(t)$. Let us first consider the case, when $f(v)$ and $f(t)$ lie in the same \emph{branch}. Then, $f(t) > f(v)$, otherwise there exists a point $w$ lying between $v$ and $a$ such that $f(w) = f(t)$,  contradicting the choice of $Q$. Since, $f$ is $P$-linear, $f(t) \in [f(u), f(v)]$ and hence it follows that,  $f(u)> f(t)> f(v)$. Thus, $f(h(t)) = f(u) \geqslant h(f(t))$. 
	
	Let us now consider the second case, when $f(v)$ and $f(t)$ don't lie in the same \emph{branch}. Since, $f(t) \in [f(u), f(v)]$, it follow that $f(u) > f(t)$. Thus, $f(h(t)) = f(u) > h(f(t))$, in this case also. It follows that for any $t \in Q$, $\alpha_t : [h(t), a] \to [h(f(t)), a] \to \dots [h(f^{q-1}(t), a)] \to [h(t), a] $ is an \emph{admissible loop of intervals}. Further,  since for each $i \in \{0,1,2, \dots q-1\}$,  $f^i(t)$ and $h(f^i(t))$ lie in the same \emph{branch} of $T$, it follows that the \emph{rotation number} of the \emph{loop} $\alpha_t$ is same as that of $Q$, that is, $\frac{p}{q}$. 
	
	Let $v_0^t = h(t)$, $v_1^t = h(f(t))$, $\dots$, $v_{q-1}^t = h(f^{q-1}(t))$. Then, $\alpha_t$ can be written as $[v_0^t, a] \to [v_1^t, a] \to [v_2^t, a] \to \dots [v_{q-1}^t, a] \to [v_0^t, a]$ where $v_i^t \in P$ and  $f(v_i^t) > v_{i+1}^t$. If $f(v_i^t) \neq v_{i+1}^t$ for some $i \in \{0,1,2, \dots q-1\}$, then $v_{i+1}^t = f^j(v_i^t)$ for some $j
	>1$. In this case, we can replace the arrow, $[v_i^t,a] \to [v_{i+1}^t,a]$ with the \emph{block} $[v_i^t,a] \to [f(v_i^t),a] \to \dots [f^{j-1}(v_i^t),a] \to [v_{i+1}^t,a]$. We do this for every $i$ with $v_{i+1}^t \neq v_i^t$. Then, we get a new \emph{admissible loop} $\gamma_t$, which is either the \emph{fundamental loop} of $P$ or its repetition  and so the \emph{rotation number} of $\gamma_t$ must be equal to that of $P$, that is, $\rho$.
	
	By construction, the \emph{rotation number} of $\gamma_t$ is the weighted average of the \emph{rotation number} of $\gamma_t$ and \emph{inserts} (here by \emph{rotation number} of an \emph{insert} $\delta : [f(v_i^t),a] \to \dots [f^{j-1}(v_i^t), a] \to [v_{i+1}^t, a]$) we mean the quantity $\rho_{\delta} = \frac{1}{j-1} [d(f(v_i^t), f^2(v_i^t)) + \dots +  d(f^{j-1}(v_i^t), v_{i+1}^t)])$.  Consider  an \emph{insert} $\delta : [f(v_i^t),a] \to \dots [f^{j-1}(v_i^t), a] \to [v_{i+1}^t, a]$) and let its \emph{rotation number} be $\rho_{\delta}$. Since, the \emph{code} for $\pi$ is \emph{non-decreasing}, so, since, $f(v_i^t) > v_{i+1}^t$ and $\rho(\pi) < \frac{1}{3}$, we have,  $\psi (f(v_i^t)) \leqslant \psi (v_{i+1}^t)$ (see definition \ref{non:decreasing}). From this,  simple computation yields $ \rho_{\delta} \leqslant \rho $. Thus, the \emph{rotation number} $\rho_{\delta}$ of every \emph{insert} $\delta$ is at-most $\rho$. The \emph{rotation number} $\rho$ of $\gamma_t$ is the weighted average of the \emph{rotation number} $\frac{p}{q} < \rho$ of $\alpha_t$ and the \emph{rotation number} of \emph{inserts} each of which is at-most $\rho$, a contradiction that implies the claim.

\end{proof}

\begin{theorem}\label{block:structure:equivalence}
	A pattern $\pi$ on a triod $T$ having a block structure over a triod-twist pattern is a frontier pattern. 
\end{theorem}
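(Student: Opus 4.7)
The plan is to reduce to Theorem~\ref{non:decreasing:conclusion} by showing that a pattern $\pi$ with a block structure over a triod-twist pattern $B$ admits a non-decreasing code function. Let $P$ be a cycle exhibiting $\pi$, decomposed into blocks $P_1,\dots,P_m$ sitting over the cycle $Q=\{x_1,\dots,x_m\}$ of pattern $B$, with $x_i\in P_i$ and $f(P_i)=P_j$ whenever $f(x_i)=x_j$ (here $f$ is a $P$-linear map). Write $k=|P_i|$, so $|P|=mk$, and note that each $P_i$ lies in a single branch of $T$ (the one containing $x_i$), since $[P_i]$ avoids the branching point $a$.

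The first step is to verify $\rho(\pi)=\rho(B)$. For any $y\in P_i$ the displacement $d(y,f(y))$ depends only on the branches of $P_i$ and $P_{\sigma(i)}$ (where $\sigma$ is the permutation induced by $Q$), and therefore equals $d(x_i,f(x_i))$; summing around the fundamental loop of $P$ yields $k$ copies of $d(\Gamma_Q)$, giving $\rho(\pi)=\rho(B)$. Call this common value $\rho=p/m$, which is in lowest terms because $B$, being a triod-twist, has co-prime rotation pair. Next I compute the code $\psi$ of $P$. Normalize so that $x_1$ is the basepoint for both $\psi$ and the code $\psi_Q$ of $Q$. Since $f^j(x_1)\in P_{\sigma^j(1)}$ and the per-step displacements in $P$ and $Q$ agree, one has $\psi(x_i)=\psi_Q(x_i)$ for every $i$. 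Furthermore, for any $y\in P_1$ the displacement accumulated over $m$ steps equals $p$ (same reason as above), so
\[
\psi(f^m(y))=\psi(y)+m\rho-p=\psi(y).
\]
Iterating, $\psi$ is constant on the $f^m$-orbit of $y$, which is all of $P_1$; by the analogous argument, $\psi\equiv\psi_Q(x_i)$ on $P_i$ for every $i$.

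With this in hand the non-decreasing property transfers from $Q$ to $P$. Consider $y\in P_i$ and $y'\in P_j$ on a common branch with $y>y'$. If $i=j$, both codes equal $\psi_Q(x_i)$ and there is nothing to show. If $i\ne j$, the convex hulls $[P_i]$ and $[P_j]$ are disjoint subsets of a single branch, hence linearly ordered; since $y>y'$, the block $[P_i]$ lies farther from $a$ than $[P_j]$, forcing $x_i>x_j$. Because $B$ is a triod-twist, $\psi_Q$ is strictly increasing by Theorem~\ref{tri-od:rot:twist:order:inv}, which yields $\psi_Q(x_i)\le\psi_Q(x_j)$ when $\rho\le 1/3$ and $\psi_Q(x_i)\ge\psi_Q(x_j)$ when $\rho>1/3$, i.e.\ exactly the condition of Definition~\ref{non:decreasing} for $\psi$. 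Theorem~\ref{non:decreasing:conclusion} then concludes that $\pi$ is a frontier pattern.

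The main technical point is the bookkeeping that forces $\psi$ to be constant on each block: this relies precisely on $\rho(B)$ being co-prime, a hallmark of triod-twists, so that the correction $[t_m]=p$ cancels $m\rho$ exactly. The borderline case $\rho=1/3$ lies outside the domain of the code function, but there the only triod-twist is the primitive period-$3$ pattern of Theorem~\ref{rot:one:third}, and a direct application of Theorem~\ref{result:1} to $mrp(\pi)$ handles this degenerate case separately.
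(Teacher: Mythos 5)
Your proof is correct and follows essentially the same route as the paper: show that the code of $P$ is constant on each block and agrees with the strictly increasing code of the underlying triod-twist cycle, hence is non-decreasing, and then invoke Theorem \ref{non:decreasing:conclusion}. You supply the bookkeeping (the $f^m$-orbit computation and the identification $\psi\equiv\psi_Q(x_i)$ on each $P_i$) that the paper merely asserts ``by construction,'' and you at least flag the $\rho=\tfrac13$ case, which the code function does not cover and which the paper's proof passes over silently, although your proposed fix via Theorem \ref{result:1} is left vague.
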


\begin{proof}
	Let $P$ be a periodic orbit that exhibits the \emph{pattern} $\pi$, and let $f$ be a $P$-\emph{linear map}. Denote the \emph{blocks} of $P$ by $P_1,P_2, \dots P_k$ such that collapsing each \emph{block} to a single point produces a cycle $Q$ \emph{exhibiting} a \emph{triod-twist pattern} with \emph{rotation pair} $(p,q)$, where $p$ and $q$ are \emph{co-prime}. Assume each \emph{block} $P_i$, $i \in \{1,2,\dots k\}$ of $P$ consists of $k$ points, making the \emph{rotation pair} of $P$ equal to $(kp,kq)$. The \emph{code} for the \emph{cycle} $P$ is \emph{non-decreasing},  because by construction all points of $P$ within a given \emph{block} $P_i$ of $P$ must have the same value of \emph{code} function,  and the \emph{code} of the cycle $Q$ obtained by collapsing each \emph{block} to a point is \emph{strictly increasing}. The conclusion then follows directly from Theorem \ref{non:decreasing:conclusion}.

\end{proof}

Are \emph{triod-twist patterns} and \emph{patterns} having \emph{block structures} over \emph{triod-twist patterns}, the only \emph{frontier patterns}? This question was previously examined by Misiurewicz in \cite{almBB} for circle maps, where it was demonstrated that the answer is negative for circle maps. Specifically, in \cite{almBB} it was shown that for \emph{patterns} of circle maps that do not have a \emph{block structure} over a \emph{twist periodic orbit} (analogue of \emph{triod twist patterns} for circle maps), the \emph{rotation number} of the circle \emph{pattern} necessarily lies in the interior of the \emph{rotation interval forced} by the \emph{pattern}.

In contrast, we now unveil the existence of peculiar \emph{frontier patterns} on \emph{triods} that are neither \emph{triod-twists} nor has a \emph{block structure} over a \emph{triod-twist pattern}. This finding constitutes a notable deviation from the previously observed phenomenon (see \cite{BMR}), according to which rotation theory for circle maps and rotation theory for maps on \emph{triods} are analogous. Consider the following example (See Figure \ref{strange_orbit_existence}). 

\begin{example}\label{example:existence:strange}
	Consider a \emph{pattern} $\pi$ represented  by a periodic orbit $P$ of period $8$ on $T$, consisting of points $x_1, x_2, x_3, x_4$ in the \emph{branch} $b_0$, points $y_1, y_2$ in the \emph{branch} $b_1$ and points $z_1, z_2$ in the \emph{branch} $b_2$ in the direction away from the \emph{branching point} $a$ such that $f(x_1) = y_1, f(y_1) = z_1$, $f(z_1) = x_2$, $f(x_2) = y_2$, $f(y_2) = z_2$, $f(z_2) = x_4$, $f(x_4) = x_3$, $f(x_3) = x_1$ (See Figure \ref{strange_orbit_existence}). Clearly, the \emph{rotation number} of $P$ and hence of $\pi$ is $\rho(\pi) = \frac{1}{4}$. Let us compute \emph{codes} for the points of $P$.  Assuming $\psi(x_4) =0$, the codes for the remaining points of $P$ are given by $\psi(x_1) =\frac{1}{2}, \psi(x_2) = \frac{1}{4}, \psi(x_3) = \frac{1}{4}, \psi(y_1) = \frac{3}{4}, \psi(y_2) = \frac{1}{2}, \psi(z_1) = 1, \psi(z_2) = \frac{3}{4}$. Thus, the \emph{code function} $\psi$ for $\pi$ is \emph{non-decreasing}. So, by Theorem \ref{non:decreasing:conclusion}, $\pi$ is a \emph{frontier pattern}. But, clearly the \emph{pattern} $\pi$ is neither a \emph{triod-twist pattern} nor has a \emph{block structure} over a \emph{triod-twist pattern}. We will call such \emph{strange patterns} $\pi$,  \emph{strangely ordered}.

	\begin{figure}[H]
		\caption{A \emph{strangely ordered periodic orbit} with \emph{rotation number} $\frac{1}{4}$ and \emph{rotation pair} $(2,8)$}
		\centering
		\includegraphics[width=0.8 \textwidth]{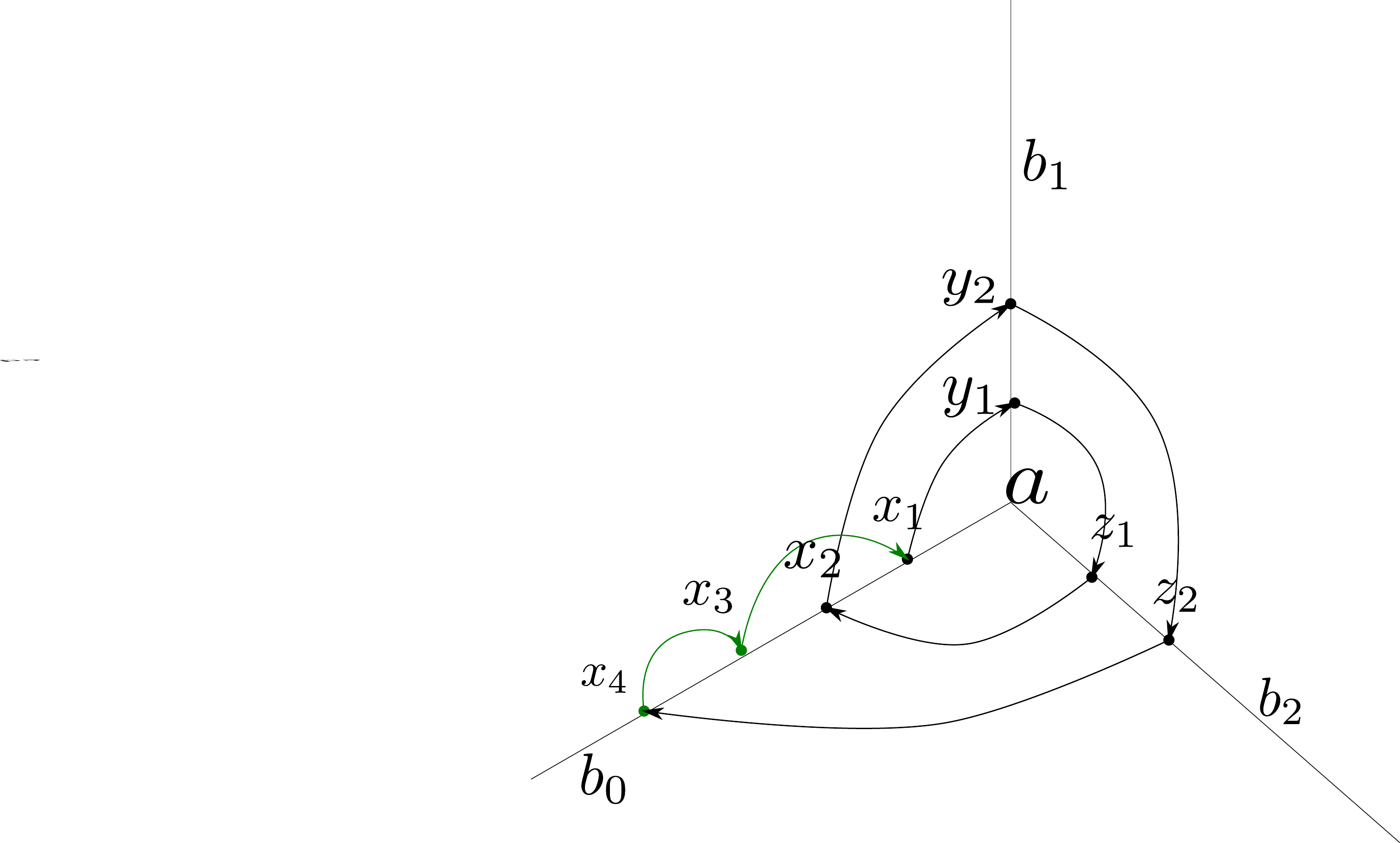}
		\label{strange_orbit_existence}
	\end{figure}
	
\end{example}

\begin{definition}
	A \emph{frontier pattern} $\pi$ is called \emph{strangely ordered} if it  is neither a \emph{triod-twist pattern} nor has a \emph{block structure} over a \emph{triod-twist pattern}.  
\end{definition}

From the above definition and Theorem \ref{decreasing:conclusion}, we have:

\begin{theorem}\label{necessary:strangely:ordered:pattern}
A strangely ordered pattern has a non-decreasing code. 
\end{theorem}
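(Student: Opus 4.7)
The statement is essentially the contrapositive of Theorem \ref{decreasing:conclusion}, so my plan is to derive it directly from that theorem together with the dichotomy built into Definition \ref{non:decreasing}.

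First I would recall the setup. Let $\pi$ be a strangely ordered pattern. By definition, $\pi$ is in particular a \emph{frontier pattern}, meaning its rotation number $\rho(\pi)$ coincides with an endpoint of $I_\pi$. I would also note that since $\pi$ is not a triod twist, by Theorem \ref{rot:one:third} its rotation number cannot equal $\frac{1}{3}$ (the only triod twist pattern with rotation number $\frac{1}{3}$ being $\pi_3$, which \emph{is} a triod twist). Thus the code function $\psi$ for $\pi$ is well-defined in the sense of Definition \ref{code:function}, and Definition \ref{non:decreasing} gives a clean dichotomy: $\psi$ is either non-decreasing or decreasing.

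Next I would apply Theorem \ref{decreasing:conclusion}. That theorem asserts that any pattern with a decreasing code must be an interior pattern. Contrapositively, any frontier pattern must have a code that is not decreasing, i.e. non-decreasing. Since $\pi$ is frontier by the definition of strangely ordered, this contraposition gives the desired conclusion.

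There is no substantial obstacle here; the content of the result has already been extracted in Theorem \ref{decreasing:conclusion}, and the sole remaining step is to invoke the non-decreasing/decreasing dichotomy. The only point requiring a sentence of care is the case $\rho(\pi)=\frac{1}{3}$, which I handle by pointing out that a strangely ordered pattern cannot have this rotation number, so the code function is indeed defined and the dichotomy applies.
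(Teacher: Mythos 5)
Your core argument is exactly the paper's: the theorem is obtained as the contrapositive of Theorem \ref{decreasing:conclusion}, using that a strangely ordered pattern is by definition a frontier pattern and that Definition \ref{non:decreasing} makes ``decreasing'' and ``non-decreasing'' an exhaustive dichotomy; the paper states the result as an immediate consequence of precisely these two facts, so on the main line you and the text coincide.

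The one place where you go beyond the paper is also where the reasoning slips. You assert that $\rho(\pi)\neq\frac{1}{3}$ because, by Theorem \ref{rot:one:third}, $\pi_3$ is the unique triod twist pattern with rotation number $\frac{1}{3}$ and $\pi$ is not a twist. That theorem gives uniqueness \emph{among twist patterns}; it does not say that every pattern with rotation number $\frac{1}{3}$ is a twist. Non-twist patterns with rotation number $\frac{1}{3}$ certainly exist (any pattern with rotation pair $(k,3k)$, $k\geqslant 2$, for instance one with a block structure over $\pi_3$), so ``not a twist, hence $\rho\neq\frac{1}{3}$'' is a non sequitur. Whether a strangely ordered pattern can have rotation number $\frac{1}{3}$ --- in which case the code function and the dichotomy of Definitions \ref{code:function} and \ref{non:decreasing} are not even defined --- is a subtlety the paper itself passes over, so your instinct to flag it is sound, but the justification you give does not settle it. Since this aside is not needed for the contrapositive argument as the paper runs it, it does not undermine your agreement with the paper's proof; still, that sentence should either be dropped or replaced by a genuine argument (or an explicit standing assumption) excluding $\rho(\pi)=\frac{1}{3}$.
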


Let $P$ be a \emph{strangely ordered periodic orbit} with \emph{rotation number} $\frac{p}{q}$, g.c.d$(p,q)=1$ and \emph{rotation pair} $(kp,kq)$ where $k \in \mathbb{N}$. Let $f$ be a $P$-\emph{linear} map. We will now show that $P$ is semi-conjugate to circle rotation on $\uc$ by angle $\frac{p}{q}$. 

\begin{definition}\label{coherant:strip}
We define a relation $\sim$ on $P$ as follows. For $x,y \in P$, define,  $x \sim y$ if $x$ and $y$ lie in the same \emph{branch} of $T$ and points of $P$ lying in $[x,y]$ have the same integral value of the \emph{code function} $\psi$. It follows that $\sim$ is an equivalence relation on $P$. We call the convex hulls of each equivalence class under $\sim$, a \emph{coherent-strip} of $P$.
\end{definition}

\begin{theorem}
	There exists a map $\Phi: T \to \uc$ which semi-conjugates $P$ with circle rotation on $\uc$ by angle $\frac{p}{q}$. Further, $\Phi$ is monotone on each coherent-strip of $P$. 
\end{theorem}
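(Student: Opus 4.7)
The plan is to define $\Phi$ on $P$ via the code function reduced modulo $1$ and then extend it to all of $T$ as a continuous piecewise monotone map by interpolating linearly within each coherent strip and joining consecutive strips by monotone arcs in $\uc$. Throughout I will identify $\uc$ with $\mathbb{R}/\mathbb{Z}$, so that circle rotation by $p/q$ is the map $R_{p/q}(\theta) = \theta + p/q \pmod 1$.

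First I would set $\Phi(x) = \psi(x) \pmod 1$ for $x \in P$. The recursion in Definition \ref{code:function} gives $\psi(f^{k+1}(x_0)) - \psi(f^k(x_0)) = \rho - ([t_{k+1}] - [t_k])$, where $\rho = p/q$ and $[t_{k+1}] - [t_k] \in \mathbb{Z}$. Reducing modulo $1$ immediately yields $\Phi(f(x)) = \Phi(x) + p/q = R_{p/q}(\Phi(x))$ for all $x \in P$, establishing the semi-conjugacy relation on $P$.

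Next I would extend $\Phi$ to each coherent strip. By Theorem \ref{necessary:strangely:ordered:pattern}, the code function $\psi$ is non-decreasing in the sense of Definition \ref{non:decreasing}, and within a coherent strip $S$ every point of $P \cap S$ shares the same integer part of $\psi$. Hence $\Phi|_{P \cap S} = \psi|_{P \cap S} - [\psi]$ is monotone in the branch order, and I extend $\Phi$ to the convex hull $S$ by linear interpolation between consecutive points of $P \cap S$, which preserves monotonicity on $S$. The complement of the union of the coherent strips in $T$ is a union of finitely many intervals: the initial pieces joining $a$ to the innermost strip on each branch, the gaps between consecutive strips on the same branch, and the outermost pieces of each branch. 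I would fix $\Phi(a) \in \uc$ arbitrarily and define $\Phi$ on each such interval by a monotone arc in $\uc$ connecting its already defined endpoint values, extending $\Phi$ constantly on the outermost pieces. The resulting $\Phi : T \to \uc$ is continuous, piecewise monotone, monotone on each coherent strip, and agrees with the semi-conjugacy values on $P$.

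The main obstacle I expect is the extension at the branching point $a$: since $R_{p/q}$ has no fixed point on $\uc$ when $p/q \neq 0$, no value of $\Phi(a)$ can satisfy the intertwining relation at $a$. However, as $a \notin P$ and the semi-conjugacy is required only on $P$, this apparent obstruction dissolves and any choice of $\Phi(a)$ is admissible; the three monotone arcs from $a$ to the innermost strips on the three branches can be chosen independently. The remaining bookkeeping — verifying that the chosen arcs on the gap intervals yield finitely many monotone pieces in total — is routine, since the coherent strips and gaps alternate finitely many times along each branch.
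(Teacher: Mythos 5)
Your proposal is correct and follows essentially the same route as the paper: define $\Phi$ on $P$ as the code function reduced modulo $1$, verify $\Phi\circ f = g\circ\Phi$ on $P$ from the recursion in Definition \ref{code:function}, extend piecewise monotonically to $T$, and deduce monotonicity on each \emph{coherent-strip} from the \emph{non-decreasing} code (Theorem \ref{necessary:strangely:ordered:pattern}). The only cosmetic difference is in the extension step, where the paper simply takes $\Phi$ affine on each $P$-\emph{basic interval} and constant off $[P]$, which subsumes your treatment of the gaps and of the point $a$.
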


\begin{proof}
We parameterize $\uc$ by $[0,1]$ with its end points identified and define $g: [0,1) \to [0,1)$ by $g(x) = x + \frac{p}{q}$ (mod 1). Let $Q$ be the orbit of $0$ for this map. We define $\Phi : P \to Q$ as follows. Pick any arbitrary point $x_0 \in P$ and set $\Phi(x_0) = \psi(x_0) =0$ and then for all $i \in \mathbb{N}$, define $\Phi(f^i(x_0)) = \psi(f^i(x_0))$(mod 1). We have for each $x \in P$, $\Phi(f(x)) = \psi(f(x))$(mod 1) = $\psi(x) + \frac{p}{q}$ (mod 1) = $g(\psi(x)) = g (\Phi(x))$. Thus, $\Phi \circ f = g \circ \Phi$. We extend $\Phi$ from $P$ to $T$ by defining it linearly on each $P$-\emph{basic interval} and such that it is constant on every component of $T- [P]$ where $[P]$ is the convex hull of $P$.
From Theorem \ref{necessary:strangely:ordered:pattern},  \emph{code} for $P$ must be \emph{non-decreasing}. It follows that,  if $A$ is a \emph{coherent-strip} of $P$,  $\psi|_{A}$ is monotone and hence, $\Phi|_{A}$ is monotone. 
\end{proof}

We now show that \emph{strangely ordered patterns} exists for arbitrary rotation pairs. We describe an algorithm to construct uni-modal \emph{strangely ordered patterns} with arbitrary  \emph{rotation pairs}. For this we require the description of unimodal \emph{triod-twist patterns} with a given \emph{rotation pair} obtained in \cite{BB5}. We first introduce  a few notions.  Let us recall \emph{color} notions from Section \ref{section:triod:twist:color}.  Sets of consecutive points of a certain \emph{color} lying in a \emph{branch} is said to form a \emph{block} of that \emph{color}. Thus, we have \emph{green blocks}, \emph{blacks blocks} and \emph{red blocks}.

\begin{figure}[H]
	\caption{Dynamics of $\Gamma_0^{\frac{2}{9}}$}
	\centering
	\includegraphics[width=0.6 \textwidth]{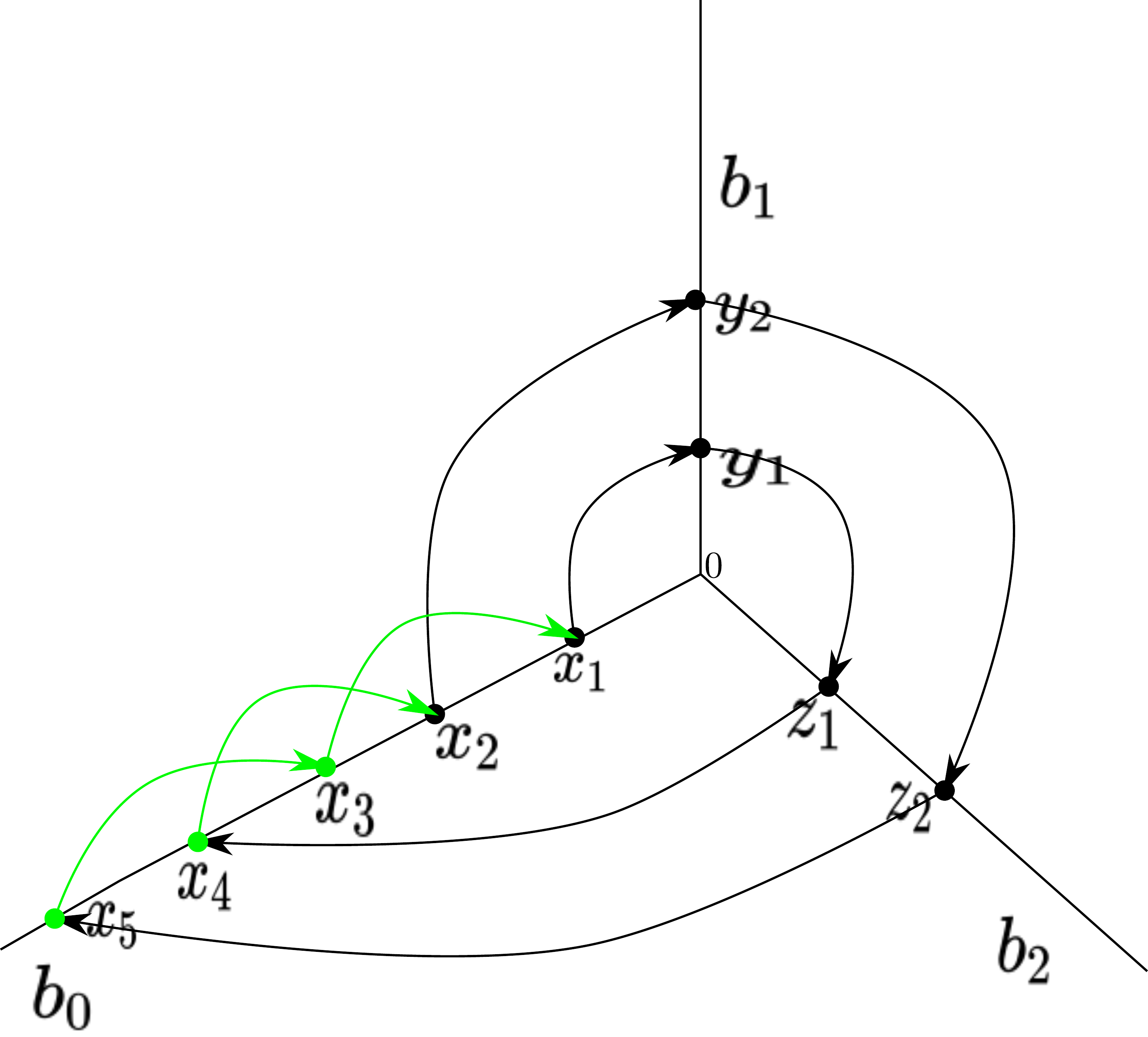}
	\label{rho_less_third_twist_example}
\end{figure}

Let $p$ and $q$ be co-prime natural numbers such that $\frac{p}{q} < \frac{1}{3}$. By \cite{BB5}, there are three distinct unimodal \emph{triod-twist patterns} $\Gamma_{j}^{\frac{p}{q}}, j \in \{0,1,2\}$ with \emph{rotation number} $\frac{p}{q}< \frac{1}{3}$.  Fix some $j \in \{ 0,1,2\}$. Let $P_{j}^{\frac{p}{q}}$ be a periodic orbit which exhibits $\Gamma_{j}^{\frac{p}{q}}$. We assume that the \emph{branches} of $T$ be \emph{canonically ordered} such that the point of $P_{j}^{\frac{p}{q}}$ closest to the \emph{branching point} $a$ in each \emph{branch} of $T$ is \emph{black}. 
Then,  the description of the  dynamics of $P_{j}^{\frac{p}{q}}$ (as obtained in \cite{BB5}) is as follows.  (See Figure \ref{rho_less_third_twist_example}) 

\begin{enumerate}
	\item The \emph{branch} $b_{j}$ consists of $q-2p$ points $x_1, x_2, \dots x_{q-2p}$ numbered in the direction away from $a$. Out of these the points $x_1, x_2 \dots x_p$ are \emph{black} and the points $x_{p+1}, x_{p+2}, \dots x_{q-2p}$ ($q-3p$ in number) are \emph{green}. The \emph{branch} $b_{j +1}$ consists of $p$ points $y_1, y_2, \dots y_p$ numbered in the direction away from $a$, all of which are \emph{black}. The \emph{branch} $b_{j+2}$ contains $p$ points $z_1, z_2, \dots z_p$ in the direction away from $a$, all of which are \emph{black}.
	
	\item $f(x_i) = f(x_{i+p})$ for $i \in \{ p+1, \dots q-2p\}$, that is, the last $q-3p$ \emph{green points} in the \emph{branch} $b_{j}$ (in the direction away from $a$) are shifted to the right by $p$ points.
	
	\item $f(x_i) = y_i$, for $i \in \{1,2, \dots p\}$, that is, the first $p$ points of the \emph{branch} $b_{j}$ (in the direction away from $a$) map to the $p$ points of the \emph{branch} $b_{j+1}$ in an order preserving fashion.
	
	\item $f(y_i) = z_i$, for $i \in \{ 1,2 \dots p \}$, that is, the $p$ points of the \emph{branch} $b_{j+1}$ map to the $p$ points of the \emph{branch} $b_{j+2}$ in an order preserving fashion. 
	
	\item $f(z_i) = x_i$ for $i \in \{1, 2, \dots p\}$, that is, the $p$ points of the \emph{branch} $b_{j+2}$ map to the first $p$ points of the \emph{branch} $b_{j}$ (in the direction away from $a$) in an order preserving fashion. 
\end{enumerate}

We now describe below an algorithm to construct \emph{strangely ordered periodic orbit} with \emph{rotation pair} $(kp,kq), k \in \mathbb{N}$. 

\begin{enumerate}
	\item We take $k$ periodic orbits $P_1, P_2, \dots P_k$, each of which \emph{exhibits pattern} $\Gamma_{j}^{\frac{p}{q}}$. Suppose the orbits are given temporal labeling with critical points $c_{i1}, i \in \{1,2, \dots k\}$ such that $P_i = \{ c_{i1}, c_{i2}, \dots c_{iq}\}$ and $c_{i(j+1)} = f(c_{ij}), j \in \{1,2, \dots q-1\}$. The orbits $P_1, P_2, \dots P_k$ are placed in such a manner that the $P$-\emph{linear map} $f$ where $P = P_1 \cup P_2 \cup \dots P_k$ is uni-modal with critical point $c_{11}$ and for every $i \in \{ 1,2 , \dots q\}$, $c_{1i} > c_{2i} > \dots c_{ki}$. We will call $\mathcal{P}$, with the map $f$ as defined above, the $k$-\emph{tuple lifting} of $\Gamma_{j}^{\frac{p}{q}}$ (See Figure \ref{rho_less_third_doubleton_lifting}). 
	
	\item We now change the map $f$ to form a new map $g$ so that the $k$ cycles are glued into one cycle under $g$ while the map $g$ remains uni-modal and the \emph{code} of the resulting cycle is \emph{non-decreasing}. Observe that from the construction and from the dynamics of $\Gamma_{j}^{\frac{p}{q}}$, it follows that for each $i \in \{1,2,\dots k\}$,  $c_{i4}$ is the point of the cycle $P_i$ farthest away from $a$ in the \emph{branch} $b_j$. Let for each $i \in \{1,2, \dots k\}$,  $c_{i\ell}$ denotes the $p$-th  point of the cycle $P_i$ in the \emph{branch} $b_j$ counting from $c_{i4}$ towards the \emph{branching} point $a$. We set $g(c_{14}) = c_{2\ell}$, that is, the point of $P_1$ farthest away from $a$ in the \emph{branch} $b_j$ is mapped to the $p$-th point of the  cycle $P_2$ in the \emph{branch} $b_j$ counting from the point $c_{24}$ of $P_2$ towards the \emph{branching} point $a$. But, this means that the point $c_{2 (\ell -1)}$ of the cycle $P_2$ that used to be mapped to $c_{2 \ell}$ under $f$ will have to be mapped somewhere else. We set $g(c_{2(\ell-1)}) = c_{3 \ell}$, that is, we move the $f$-image of $c_{2(\ell-1)} $ one step towards the \emph{branching} point $a$. Thus, the point $c_{3(\ell-1)}$ of the cycle $P_3$ that used to be mapped to $c_{3 \ell}$ under $f$ has to be mapped somewhere else. We set $g(c_{3(\ell-1)}) = c_{4 \ell}$. In other words, each point $c_{i(\ell -1)}$ of the cycle $P_i$, $i=2,3, \dots k$. that used to be mapped to $c_{i \ell}$ under $f$ has its image shifted one step towards the \emph{branching} point $a$, and thus, will be mapped  by $g$ to the point $c_{(i+1) \ell}$ which is one step towards the \emph{branching} point $a$  from its old image $c_{i \ell}$.  In particular, on the last step in the construction, the point $c_{k(\ell-1)}$ that used to be mapped to $c_{k \ell}$ under $f$ must have its image shifted one place towards the \emph{branching} point $a$, which brings the $g$-image of this point to $c_{15}$, the point of $\mathcal{P}$ located immediately next to point $c_{k \ell}$ towards the \emph{branching} point $a$ and coinciding with the $f$-image of $c_{14}$. On other points of $\mathcal{P}$, the maps $f$ and $g$ are the same. In this way, the $f$-periodic orbits $P_1, P_2, \dots P_k$ gets merged into one $g$-orbit $\mathcal{P}$ (See Figure \ref{rho_less_third_strangely_ordered}).  
	
	\item A simple computation shows that the \emph{code} for $g$ is \emph{non-decreasing}.  Also, clearly by construction,  $\mathcal{P}$ is neither a \emph{twist pattern} nor has a \emph{block structure} over a \emph{twist pattern}. Hence, by Theorem \ref{non:decreasing:conclusion},  $\mathcal{P}$ is a \emph{strangely ordered periodic orbit} of \emph{rotation number} $\frac{p}{q} < \frac{1}{3}$ and \emph{rotation pair} $(kp,kq)$. Finally, observe that we can choose any $j \in \{0,1,2\}$, so the algorithm yields three distinct \emph{strangely ordered patterns} with \emph{rotation pair} $(kp,kq)$.

\end{enumerate}

	\begin{example}
		We now illustrate the algorithm by constructing a \emph{strangely ordered periodic orbit} of \emph{rotation pair} $(4,18)$. The given \emph{rotation pair} is of the form $(kp,kq)$ where $k=2$, $p=2$ and $q=9$. To this end, we take two periodic orbits $P_1 = \{ \alpha_1, \alpha_2, \dots,  \alpha_9\}$ and $P_2 = \{ \beta_1, \beta_2, \dots,  \beta_9\}$, each of which exhibits \emph{pattern} $\Gamma_0^{\frac{2}{9}}$. We give the orbits, temporal labeling, that is, $f(\alpha_i) = \alpha_{i+1}$ and $f(\beta_i) = \beta_{i+1}$ for $i \in \{1, 2, \dots 9\}$.  Let the critical points of $P_1$ and $P_2$ be $\alpha_1$ and $\beta_1$ respectively.

		\begin{figure}[H]
			\caption{\emph{Double-ton lifting} of the \emph{pattern} $\Gamma_0^{\frac{2}{9}}$.}
			\centering
			\includegraphics[width=1 \textwidth]{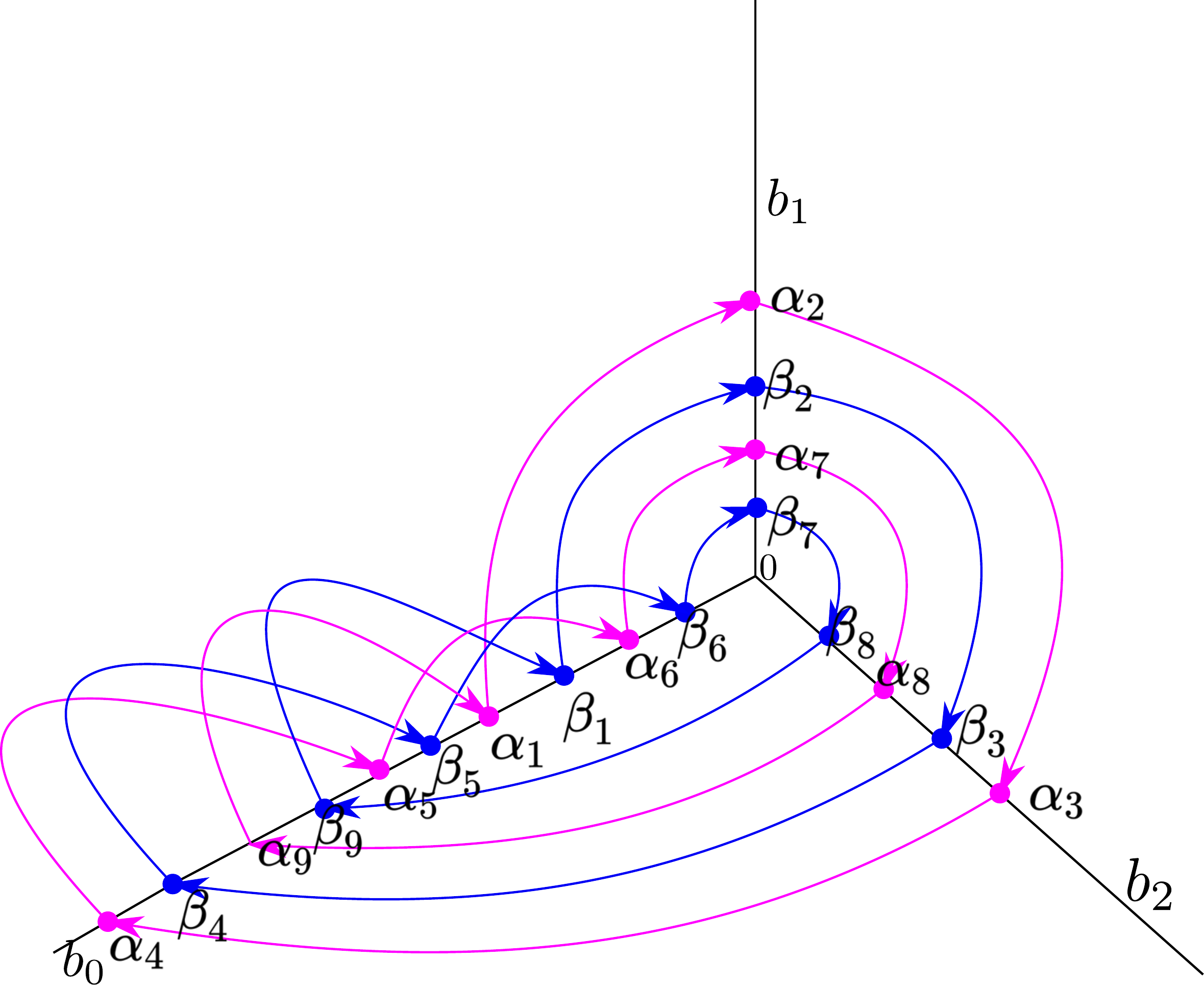}
			\label{rho_less_third_doubleton_lifting}
		\end{figure}

		Let $\mathcal{P} = P_1 \cup P_2$. We place the periodic orbits $P_1$ and $P_2$ in such a manner that the $\mathcal{P}$-\emph{linear map} $f$ has a unique critical point $\alpha_1$ which is the absolute maximum of $f$ and for every $i \in \{1,2, \dots 9\}, \alpha_i > \beta_i$. The set $\mathcal{P}$ with the map $f$ as defined above will be called the \emph{double-ton lifting} of $\Gamma_0^{\frac{2}{9}}$ (See Figure \ref{rho_less_third_doubleton_lifting}).

		Let us now change the map $f$ to obtain a new map $g$ so that the two cycles $P_1$ and $P_2$ gets mingled to one cycle under $g$ but $g$ remains uni-modal and the \emph{code} of the composite periodic orbit $g|_{\mathcal{P}}$ is \emph{non-decreasing}. For this, we note that $\alpha_4$ is the point of $P_1$ farthest away from $a$ in the \emph{branch} $b_0$. Here, $p=2$ and the second point of $P_2$ counting from $\alpha_4$ (which is the point of $P_2$ farthest away from $a$ towards the point $a$) is $\beta_9$. We define, our map $g$ as follows. Define, $g(\alpha_4) = \beta_9$. This means that the point $\beta_8$ which used to be mapped to $\beta_9$ under $f$ originally will now have to be mapped somewhere else under $g$. We set $g(\beta_8) = \alpha_5$. Observe that $\alpha_5$ is the point of $\mathcal{P} = P_1 \cup P_2$ immediately next to $\beta_9$ in the direction towards $a$, that is,  the $f$-image of $\beta_8$ is shifted one place in the direction towards $a$, under the ``new" map $g$. On all other points of $\mathcal{P}$, we keep the actions of the maps $f$ and $g$ exactly the same. In this way, the $f$-orbits $P_1$ and $P_2$ gets amalgamated into one $g$-orbit $\mathcal{P}$ (See Figure \ref{rho_less_third_strangely_ordered}).

		 \begin{figure}[H]
		 	\caption{Unimodal \emph{strangely ordered periodic orbit} of \emph{rotation number} $\frac{2}{9}$ and \emph{rotation pair} $(4,18)$.}
		 	\centering
		 	\includegraphics[width=1 \textwidth]{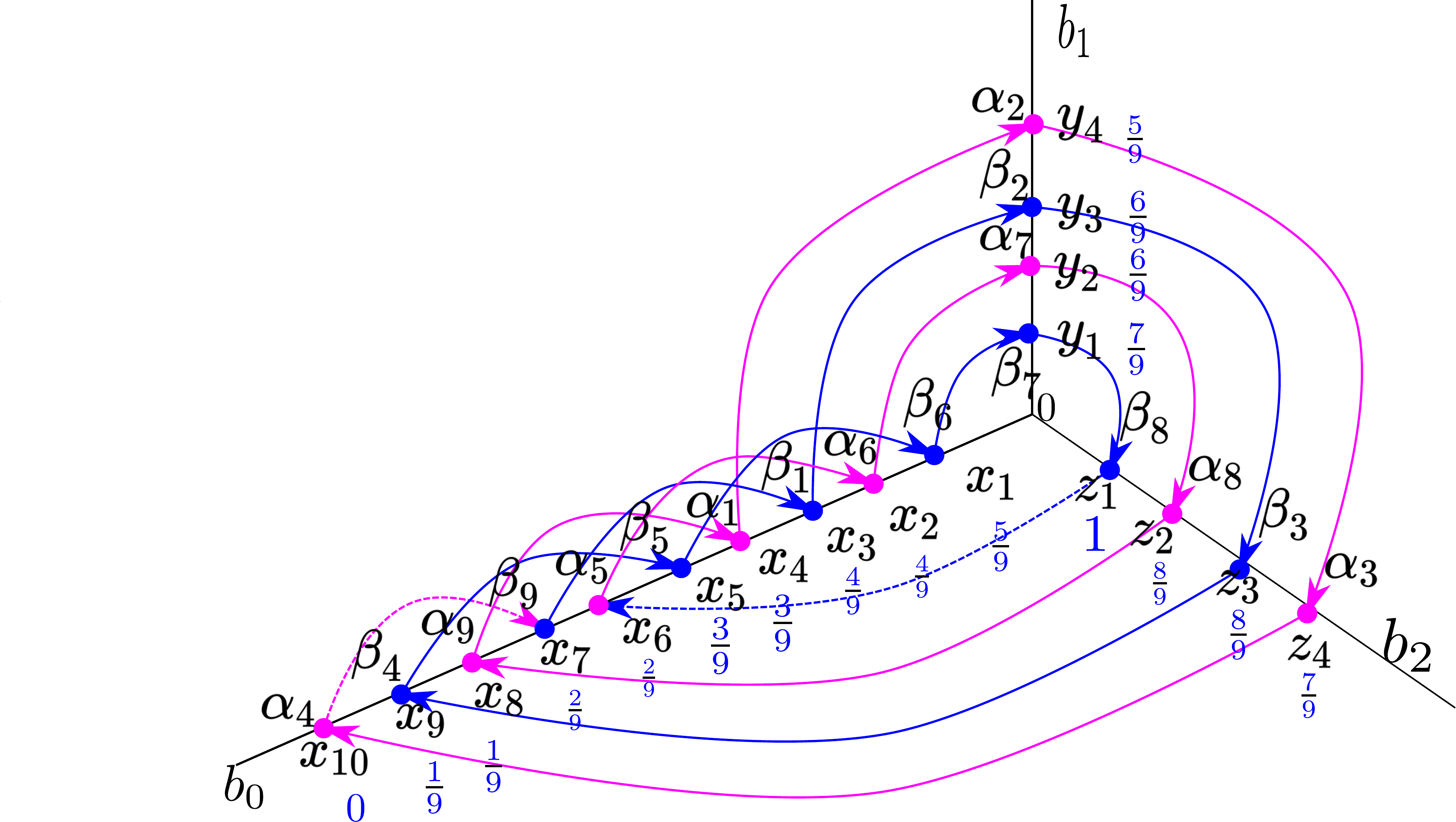}
		 	\label{rho_less_third_strangely_ordered}
		 \end{figure}

		 We now compute the \emph{code} of the orbit $g|_{\mathcal{P}}$. Let us rename the points of $g|_{\mathcal{P}}$ by spatial labeling as follows. The points of $\mathcal{P}$ in the \emph{branch} $b_0$ are named as $x_1, x_2, \dots x_{10}$ in the direction away from $a$ . Similarly, the points of $\mathcal{P}$ in the \emph{branch} .$b_1$ and $b_2$ are renamed as $y_1, y_2,  y_3, y_4$ and $z_1, z_2, z_3, z_4$ respectively, in the direction away from the \emph{branching} point $a$. We set the \emph{code} of the point $x_{10}$, situated farthest away from $a$ in the \emph{branch} $b_0$ to be $0$ and compute the \emph{codes} of the remaining points of $g|_{\mathcal{P}}$ with respect to it. We have $\psi(x_1) = \frac{5}{9}, \psi(x_2) = \frac{4}{9}, \psi(x_3 )= \frac{4}{9}, \psi(x_4) = \frac{3}{9}\, \psi(x_5) = \frac{3}{9}, \psi(x_6) = \frac{2}{9}$, $\psi(x_7) = \frac{2}{9}$, $\psi(x_8) = \frac{1}{9}, \psi(x_9) = \frac{1}{9}$, $\psi(x_{10}) =0$, $\psi(y_1) = \frac{7}{9}, \psi(y_2) = \frac{6}{9}, \psi(y_3) = \frac{6}{9}, \psi(y_4) = \frac{5}{9}, \psi(z_1) = 1, \psi(z_2) = \frac{8}{9} , \psi(z_3) = \frac{8}{9}, \psi(z_4) = \frac{7}{9}$ (See Figure \ref{rho_less_third_strangely_ordered}).  Thus, the \emph{code} for the periodic orbit, $g|_{\mathcal{P}}$ is \emph{non-decreasing}. We observe that $g|_{\mathcal{P}}$ is neither a \emph{triod-twist periodic orbit} nor has a \emph{block structure} over a \emph{triod-twist periodic orbit}. Hence, by Theorem \ref{non:decreasing:conclusion},  $g|_{\mathcal{P}}$ is a \emph{strangely ordered periodic orbit} of \emph{rotation pair} $(4,18)$.

	\end{example}

	We now consider the case, when the \emph{rotation number} $\rho = \frac{r}{s} > \frac{1}{3}$, g.c.d$(r,s)=1$. By \cite{BB5}, there are three distinct uni-modal \emph{triod-twist patterns} $\Delta_j^{\frac{r}{s}}, j \in \{0,1,2\}$ with rotation number $\frac{r}{s}> \frac{1}{3}$. Let $Q_j^{\frac{r}{s}}$ be a periodic orbit which exhibits $\Delta_j^{\frac{r}{s}}$ for some fixed $j \in \{0,1,2\}$. We assume that the \emph{branch}es of $T$ are \emph{canonically ordered} such that the point of $Q_j^{\frac{r}{s}}$ closest to the \emph{branching} point $a$ in each \emph{branch} is \emph{black}. Then the dynamics of $Q_j^{\frac{r}{s}}$ can be described as follows (see \cite{BB5}) (See Figure \ref{rho_greater_third_twist_example}).

	\begin{figure}[H]
		\caption{Dynamics of $\Delta_0^{\frac{2}{5}}$}
		\centering
		\includegraphics[width=0.6 \textwidth]{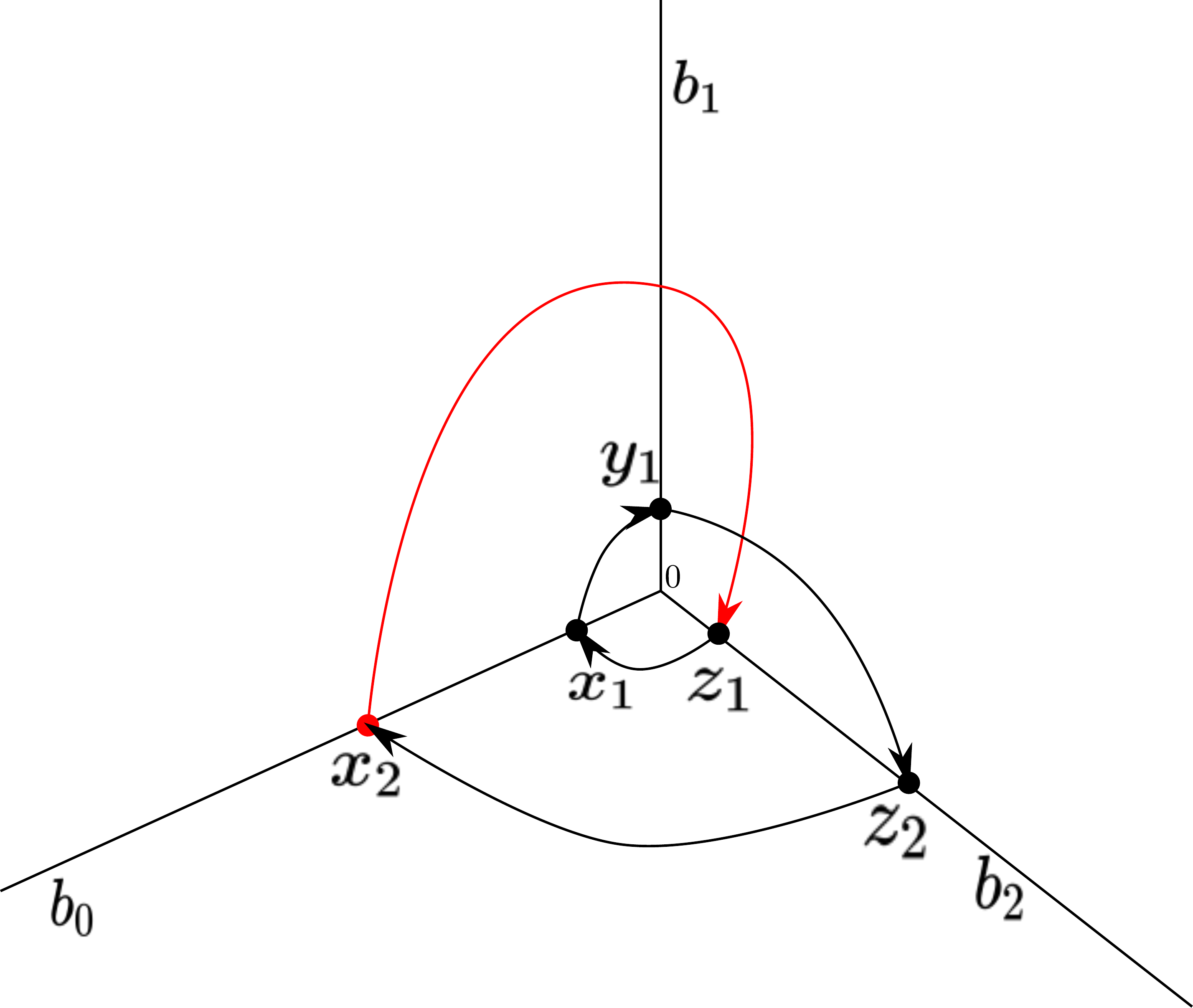}
		\label{rho_greater_third_twist_example}
	\end{figure}

\begin{enumerate}
	\item The \emph{branch} $b_j$ contains $r$ points $x_1, x_2, \dots x_{r}$ numbered in the direction away from $a$. Out of these the first $s-2r$ points $x_1, x_2 \dots x_{s-2r}$ (in the direction away from the \emph{branching} point $a$) are \emph{black} and the remaining $3r-s$ points,  $x_{s-2r+1}, x_{s-2r+2}, $ $  \dots x_{r}$ are \emph{red}. The \emph{branch} $b_{j+1}$ consists of $s-2r$ points $y_1, y_2, \dots $ $  y_{s-2r}$ numbered in the direction away from $a$, all of which are \emph{black}. The \emph{branch} $b_{j+2}$ contains $r$ points $z_1, z_2, \dots z_r$ in the direction away from $a$, all of which are \emph{black}.
	
	\item $f(x_{s-2r+i}) = z_i$ for $i \in \{ 1, 2 \dots 3r-s\}$, that is, in the last $3r-s$  points of the  \emph{branch} $b_j$ (in the direction away from $a$) which are \emph{red} must map to the first $3r-s$ points of the \emph{branch} $b_{j+2}$ (in the direction away from the \emph{branching} point $a$), that is, $f(x_{s-2r+i}) = z_i$ for $i \in \{1,2, \dots 3r-s\}$. 
	
	\item $f(x_i) = y_i$, for $i \in \{1,2, \dots s-2r\}$, that is, the first $s-2r$ points of the \emph{branch} $b_j$ which are \emph{black} (in the direction away from the \emph{branching} point $a$) map to the $s-2r$ \emph{black} points of the \emph{branch} $b_{j+1}$ in an order preserving fashion.
	
	\item Finally, the $s-2r$ \emph{black} points,  $y_1, y_2 \dots y_{s-2r}$ of the \emph{branch} $b_{j+1}$ map to the last $s-2r$ points, $z_{3r-s+1}, z_{3r-s+2}, \dots z_r$ of the \emph{branch} $b_{j+2}$ (in the direction away from the \emph{branching} point $a$), in an order preserving fashion.

\end{enumerate}

We now describe below an algorithm to construct unimodal \emph{strangely ordered periodic orbit} with \emph{rotation pair} $(kr,ks)$ where $k \in \mathbb{N}$. 

\begin{enumerate}
	\item We take $k$ cycles $Q_1, Q_2, \dots, Q_k$, each \emph{exhibiting} the \emph{triod twist pattern} $\Delta_j^{\frac{r}{s}}$ for some fixed $j \in \{0,1,2 \}$ and constructing a $k$-\emph{tuple lifting} of $\Delta_{j}^{\frac{r}{s}}$, similar to the previous case but with a slight modification. Let the cycles $Q_i$, $i \in \{1,2, \dots, k \}$ be assigned a temporal labeling such that the point of the cycle $Q_i$ farthest from the \emph{branching} point $a$ in the \emph{branch} $b_j$ is labeled as $c_{i1}$, where $i \in \{1,2, \dots, k \}$ (note the difference with the previous case). Thus, $Q_i = \{ c_{i1}, c_{i2}, \dots, c_{is} \}$, with $c_{i(t+1)} = f(c_{it})$, for $t \in \{1,2, \dots, s-1 \}$. The orbits $Q_1, Q_2, \dots, Q_k$ are then arranged such that the $\mathcal{Q}$-\emph{linear map} $f$, where $\mathcal{Q} = Q_1 \cup Q_2 \cup \dots \cup Q_k$, is unimodal and satisfies $c_{1t} > c_{2t} > \dots > c_{kt}$, for $t \in \{1,2, \dots, s-1 \}$. Note that the unique critical point of each cycle $Q_i$ is the $(3r-s)$th point of $Q_i$, measured from $c_{i1}$ in the \emph{branch} $b_j$ (towards the \emph{branching} point $a$) (See Figure \ref{rho_greater_third_doubleton_lifting}). 
	
	\item 	We now modify the map $f$ to create a new map $g$ such that the $k$ cycles are merged into a single cycle under $g$ while ensuring $g$ remains unimodal and the \emph{code} of the resulting cycle is \emph{non-decreasing}. Observe that, any $j \in \{0,1,2\}$ can be chosen such that the cycle $Q_i$ exhibits the \emph{pattern} $\Delta_j^{\frac{r}{s}}$. For simplicity, let us select $j=0$, so that all $k$ periodic orbits $Q_1, Q_2, \dots, Q_k$ exhibits pattern $\Delta_0^{\frac{r}{s}}$. Let for each $i \in \{1, 2, \dots, k\}$, $c_{i \ell}$ represent the $(3r-s+1)$-th point of $Q_i$, counting in the direction away from the \emph{branching} point $a$ in the \emph{branch} $b_2$. We define $g(c_{11}) = c_{2\ell}$, that is the point $c_{11}$ of the cycle $Q_1$, which is farthest from the \emph{branching} point $a$ in the \emph{branch} $b_0$ is mapped to the $(3r-s+1)$-th point of the cycle  $Q_2$ in \emph{branch} $b_2$ (in the direction away from the \emph{branching} point $a$) . This adjustment necessitates changes to the image of $c_{2(\ell-1)}$ under $f$, as it previously mapped to $c_{2\ell}$. Instead, we set $g(c_{2(\ell-1)}) $ = $ c_{3\ell}$, effectively shifting its image one step closer to the \emph{branching} point $a$. Similarly, $c_{3(\ell-1)}$, which was mapped to $c_{3\ell}$ under $f$, now has its image moved to $c_{4\ell}$ under the map $g$. This iterative process ensures that for each $j = 2, 3, \dots, k$, the point $c_{j(\ell-1)}$, previously mapped to $c_{j\ell}$ under $f$, is now mapped one step closer to $a$ under $g$, to the point $c_{(j+1)\ell}$. On the final step of this construction, the point $c_{k(\ell-1)}$, previously mapped to $c_{k\ell}$ under $f$, is mapped to $c_{12}$ under $g$. This point, $c_{12}$, is one step closer to $a$ than $c_{k\ell}$ and also coincides with the image of $c_{11}$ under $f$. For all other points in $\mathcal{Q}$, the maps $f$ and $g$ are identical. Through this construction, the $f$-orbits $Q_1, Q_2, \dots, Q_k$ are successfully merged into a single $g$-orbit $\mathcal{Q}$ (See Figure \ref{rho_greater_third_strangely_ordered}).

\item We claim $g|_{\mathcal{Q}}$ is the required unimodal \emph{strangely ordered periodic orbit} of \emph{rotation pair} $(kr, ks)$. Simple computation shows that the \emph{code} for $\mathcal{Q}$ is \emph{non-decreasing}. Also, $\mathcal{Q}$ is neither a \emph{twist periodic orbit}, nor has a \emph{block structure} over a \emph{twist periodic orbit}. So, the claim follows from Theorem \ref{non:decreasing:conclusion}.

\end{enumerate}

	\begin{figure}[H]
	\caption{\emph{Double-ton lifting} of the \emph{pattern} $\Delta_0^{\frac{2}{5}}$}
	\centering
	\includegraphics[width=1 \textwidth]{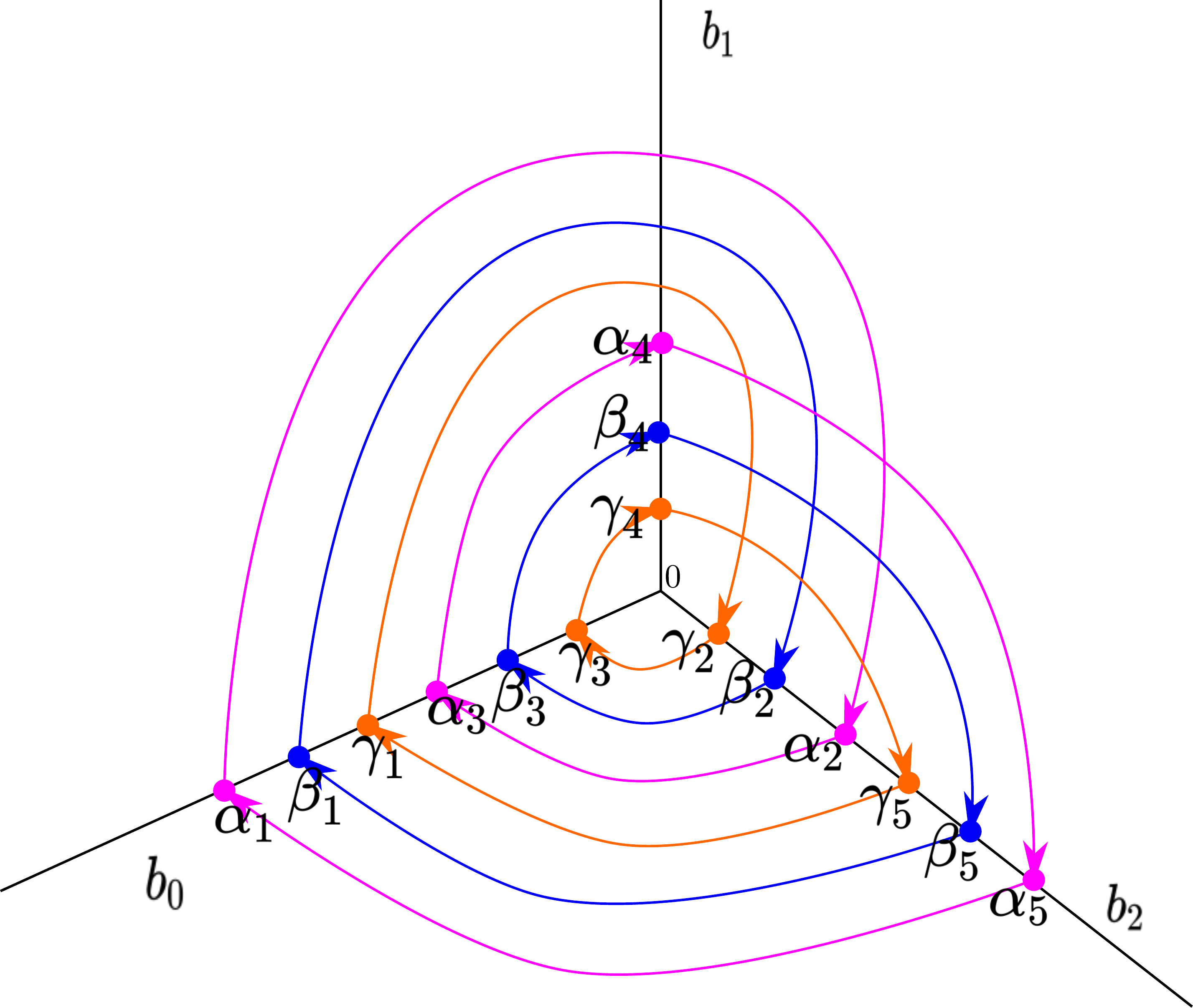}
	\label{rho_greater_third_doubleton_lifting}
\end{figure}

\begin{example}
	We now illustrate the algorithm by constructing a \emph{strangely ordered periodic orbit} of \emph{rotation pair} $(6,15)$. The given \emph{rotation pair} is of the form $(kr,ks)$ where $k=3$, $r=2$ and $s=5$. To this end, we take three periodic orbits $Q_1 = \{ \alpha_1, \alpha_2, \alpha_3, \alpha_4, \alpha_5\}$, $Q_2 = \{ \beta_1, \beta_2, \beta_3, \beta_4, \beta_5\}$ and $Q_3 = \{ \gamma_1, \gamma_2, \gamma_3, \gamma_4, \gamma_5\}$, each of which exhibits \emph{pattern} $\Delta_0^{\frac{2}{5}}$. We give the orbits, temporal labeling, that is, $f(\alpha_i) = \alpha_{i+1}$, $f(\beta_i) = \beta_{i+1}$ and $f(\gamma_i) = \gamma_{i+1}$,  and  for $i \in \{1, 2, 3, 4, 5\}$.  Let $\mathcal{Q} = Q_1 \cup Q_2 \cup Q_3$ and let $f$ be a $\mathcal{Q}$-\emph{linear} map.

	We now form a $3$-\emph{tuple lifting} of the \emph{pattern} $\Delta_0^{\frac{2}{5}}$ (See Figure \ref{rho_greater_third_doubleton_lifting}).  For this, we place the periodic orbits $Q_1, Q_2$ and $Q_3$ in such a manner that (i) $f$ has a unique critical point $\gamma_1$, (ii) $\alpha_1, \beta_1$ and $\gamma_1$ are the points of the cycles $Q_1, Q_2$ and $Q_3$ farthest from the \emph{branching} point $a$ in the \emph{branch} $b_0$ and (iii) $\alpha_i > \beta_i > \gamma_i$ for $i \in \{1,2, 3, 4, 5\}$. 
	Observe that unique critical points of the cycles, $Q_1, Q_2$ and $Q_3$ are respectively $\alpha_1, \beta_1$ and $\gamma_1$. Now, let us modify the map $f$ to form a new map $g$ such that the three cycles $Q_1$, $Q_2$, and $Q_3$ are amalgamated under $g$, while $g$ remains unimodal, and the \emph{code} of the composite periodic orbit $g|_{\mathcal{Q}}$ is \emph{non-decreasing}. Specifically, the $3r-s+1 = (3 \cdot 2 - 5 + 1) = 2$nd point of $Q_2$ in the \emph{branch} $b_2$, counting away from $a$, is $\beta_5$. Set $g(\alpha_1) = \beta_5$. Consequently, $\beta_4$, which was mapped to $\beta_5$ under $f$, must now be mapped elsewhere. Set $g(\beta_4) = \gamma_5$, shifting its image one step closer to the \emph{branching} point $a$. Similarly, $\gamma_4$, originally mapped to $\gamma_5$ under $f$, is now set to $g(\gamma_4) = \alpha_2$, again shifting its image closer to $a$. For all other points, $f$ and $g$ remain identical. In this way, the $f$-orbits $Q_1, Q_2$ and $Q_3$ gets amalgamated into one $g$-orbit $\mathcal{Q}$ (See Figure \ref{rho_greater_third_strangely_ordered}).

	\begin{figure}[H]
		\caption{Unimodal \emph{strangely ordered periodic orbit} of \emph{rotation number} $\frac{2}{5}$ and \emph{rotation pair} $(6,15)$.}
		\centering
		\includegraphics[width=1 \textwidth]{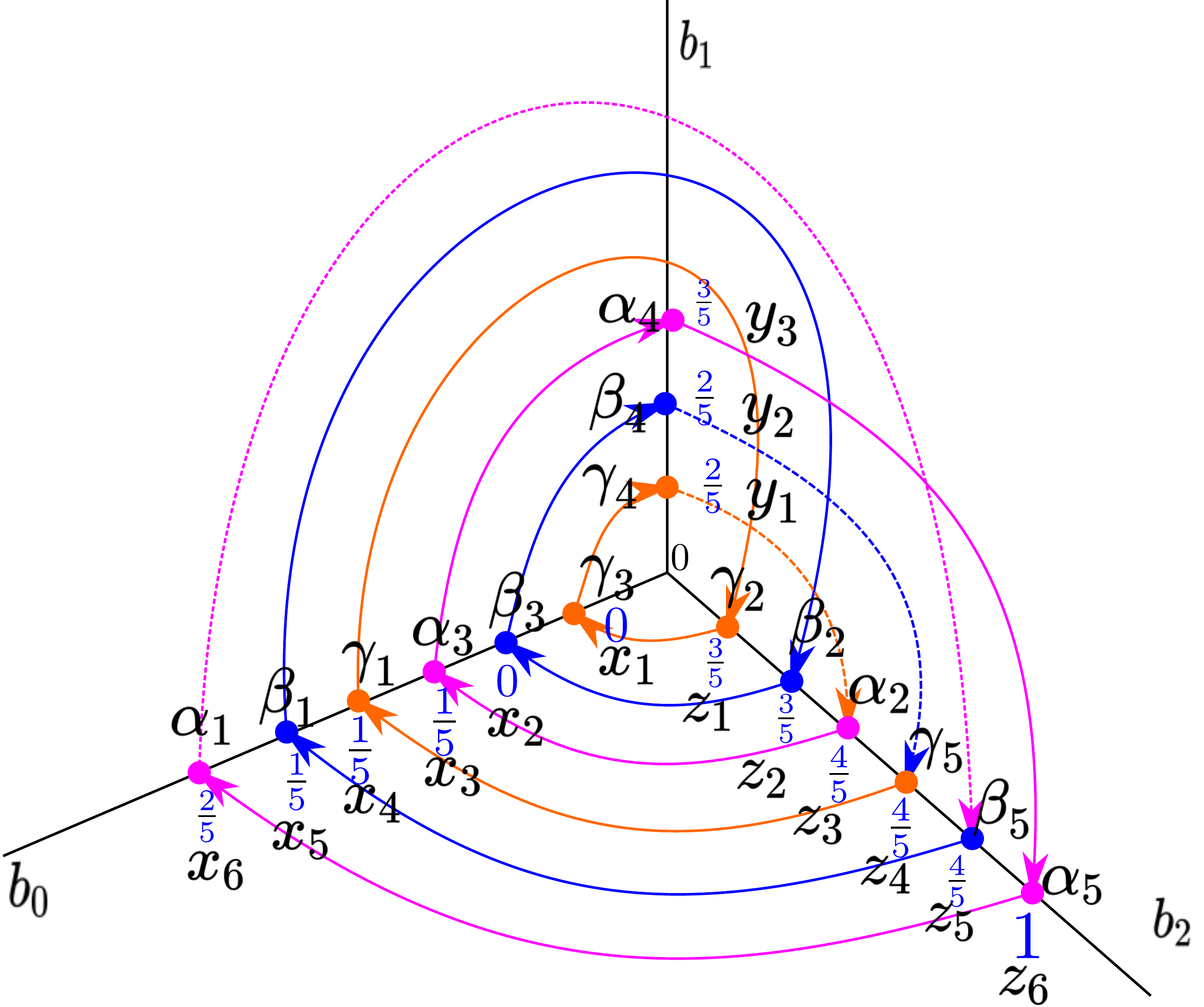}
		\label{rho_greater_third_strangely_ordered}
	\end{figure}

	Let us compute the \emph{code} of the orbit $g|_{\mathcal{Q}}$. Let us rename the points of $g|_{\mathcal{Q}}$ in the \emph{branch} $b_0$ in the spatial labeling as $x_1, x_2, x_3, x_4, x_5, x_6 $ (in the direction away from $a$). Similarly, the points of $\mathcal{Q}$ in $b_1$ in the spatial labeling are named as $y_1, y_2$ and $y_3$ in the direction away from $a$. Finally, the points of $\mathcal{Q}$ in the \emph{branch} $b_2$ are labeled as $z_1, z_2, z_3, z_4, z_5, z_6$ in the direction away from $a$.    We set the \emph{code} of the point $x_{1}$ to be $0$ and compute the \emph{codes} of the remaining points of $g|_{\mathcal{Q}}$ with respect to it. We have,  $ \psi(x_2) = 0, \psi(x_3) = \frac{1}{5},\psi(x_4)= \frac{1}{5}, \psi(x_5) = \frac{1}{5}\, \psi(x_6) = \frac{2}{5}, \psi(y_1) = \frac{2}{5}$, $\psi(y_2) = \frac{2}{5}$, $\psi(y_3) = \frac{3}{5}, \psi(z_1) = \frac{3}{5}$, $\psi(z_2) =\frac{3}{5}$, $\psi(z_3) = \frac{4}{5}, \psi(z_4) = \frac{4}{5}, \psi(z_5) = \frac{4}{5}, \psi(z_6) =1$. Thus, the \emph{code} for the periodic orbit,  $g|_{\mathcal{Q}}$ is \emph{non-decreasing}. We observe that $g|_{\mathcal{Q}}$ is neither a \emph{triod-twist periodic orbit} nor has a \emph{block structure} over a \emph{triod-twist periodic orbit}. Hence, $g|_{\mathcal{Q}}$ is a \emph{strangely ordered} periodic orbit of \emph{rotation pair} $(6,15)$ by Theorem \ref{non:decreasing:conclusion}.

\end{example}

\end{document}